\documentclass{amsart}

\usepackage{amsmath}

\usepackage{cite}
\usepackage{amsthm,thmtools}
\usepackage{amssymb}
\usepackage{ytableau}
\usepackage[mathscr]{eucal}
\usepackage{diagrams}
\usepackage{tikz-cd}
\usepackage{hyperref}
\usepackage{enumitem}
\usepackage{microtype}
\usepackage{tabularx}
\usepackage{array}



\let\AA\relax
\newcommand{\AA}{\mathbb{A}}

\newcommand{\NN}{\mathbb{N}}
\let\SS\relax
\newcommand{\SS}{\mathbb{S}}
\newcommand{\QQ}{\mathbb{Q}}

\newcommand{\PP}{\mathbb{P}}
\newcommand{\ZZ}{\mathbb{Z}}
\newcommand{\ZZZ}{\widehat{\ZZ}}

\newcommand{\F}{\mathcal{F}}

\newcommand{\op}{\mathrm{op}}

\let\P\relax
\newcommand{\P}{\mathrm{P}}

\let\M\relax
\DeclareMathOperator{\M}{M}

\DeclareMathOperator{\Hom}{Hom}

\DeclareMathOperator{\GL}{GL}
\DeclareMathOperator{\PGL}{PGL}
\DeclareMathOperator{\PSL}{PSL}
\DeclareMathOperator{\SL}{SL}
\DeclareMathOperator{\aut}{Aut}

\newcommand{\sets}{\mathsf{Set}}
\newcommand{\setswith}[1]{#1\text{-}\sets}

\newcommand{\sh}{\mathsf{Sh}}
\newcommand{\psh}{\mathsf{PSh}}

\newcommand{\nat}{\mathrm{Nat}}
\newcommand{\inn}{\mathrm{Inn}}
\newcommand{\out}{\mathrm{Out}}
\newcommand{\ext}{\mathrm{Ext}}

\newcommand{\fP}{\mathfrak{P}}

\newcommand{\fM}{\mathfrak{M}}

\newcommand{\ns}{\mathrm{ns}}

\let\max\relax
\newcommand{\max}{\mathrm{max}}

\newtheorem{definition}{Definition} 
\newtheorem{proposition}[definition]{Proposition}

\newtheorem{theorem}[definition]{Theorem}
\newtheorem{corollary}[definition]{Corollary}
\newtheorem{lemma}[definition]{Lemma}

\newtheorem{remark}[definition]{Remark}

\tikzset{
b/.style={bend left=10},
bb/.style={bend left},
cl/.style={outer sep=-1pt},
}

\let\phi\varphi
\let\theta\vartheta

\newcommand{\pctext}[2]{\text{\parbox{#1}{\centering #2}}}

\title{An arithmetic topos for integer matrices}
\author{Jens Hemelaer}
\thanks{The author is a Ph.D.\ fellow of the Research Foundation -- Flanders (FWO)}

\begin{document}

\begin{abstract}
We study the topos of sets equipped with an action of the monoid of regular $2 \times 2$ matrices over the integers. In particular, we show that the topos-theoretic points are given by the double quotient $\GL_2(\ZZZ) \backslash \M_2(\AA_f) \slash \GL_2(\QQ)$, so they classify the groups $\ZZ^2 \subseteq A \subseteq \QQ^2$ up to isomorphism. We determine the topos automorphisms and then point out the relation with Conway's big picture and the work of Connes and Consani on the Arithmetic Site. As an application to number theory, we show that classifying extensions of $\QQ$ by $\ZZ$ up to isomorphism relates to Goormaghtigh conjecture.
\end{abstract}

\maketitle
\tableofcontents

\section{Introduction}

In \cite{ConnesConsani}, Connes and Consani introduced the Arithmetic Site: it is the topos of sets with an action of the monoid $\NN_+^\times$ of strictly positive natural numbers under multiplication. It comes equipped with a structure sheaf given by a tropical semiring with $\NN_+^\times$-action. While the structure sheaf is necessary in order to provide a link with the Riemann Hypothesis, the topos of $\NN_+^\times$-sets itself is already surprisingly interesting. For example, by \cite{llb-covers} the points of the topos are given by $\QQ^\times\backslash\AA_f\slash\ZZZ^\times$, with $\ZZZ = \prod_p \ZZ_p$ the profinite integers, $\ZZZ^\times$ its units and $\AA_f = \ZZZ\otimes\QQ$ the finite adeles. When taking the structure sheaf in account, the points of the topos become $\QQ^\times\backslash\AA\slash\ZZZ^\times$, with $\AA = \AA_f \times \mathbb{R}$ the full ring of adeles, by the results of \cite{ConnesConsani}. In this way, the Arithmetic Site provides a geometric meaning to both $\QQ^\times\backslash\AA_f\slash\ZZZ^\times$ and $\QQ^\times\backslash\AA\slash\ZZZ^\times$.

We will generalize the construction of the Connes--Consani arithmetic site by considering the topos $\setswith{\M_2^\ns(\ZZ)}$ of sets equipped with a left $\M_2^\ns(\ZZ)$-action, where $\M^\ns_2(\ZZ)$ denotes the $2\times2$-matrices with integer coefficients and nonzero determinant (as a monoid under multiplication). First we show that $\setswith{\M_2^\ns(\ZZ)}$ is covered as a topos by an easy to describe topological space. This allows us to show that the topos points of $\setswith{\M_2^\ns(\ZZ)}$ are given by
\begin{equation} \label{eq:double-quotient}
\GL_2(\ZZZ)\backslash\M_2(\AA_f)\slash\GL_2(\QQ)
\end{equation}
(note the similarity to the case of the Arithmetic Site). It turns out that this double quotient also classifies the abelian groups $\ZZ^2 \subseteq A \subseteq \QQ^2$ up to isomorphism. This gives an alternative to the similar classification of these groups up to isomorphism by Mal'cev in \cite{Malcev}. In \mbox{Section \ref{sec:applications}}, we study to what extent the double quotient (\ref{eq:double-quotient}) leans itself to calculations. We provide an alternative proof for the isomorphism $\ext^1(\QQ,\ZZ) \cong \AA_f/\QQ$ and then give an adelic criterion for when two extensions are isomorphic (as abelian groups).

Note that the $\NN_+^\times$ is the free commutative monoid with the prime numbers as generators. In particular, the prime numbers are indistinguishable from each other: for each permutations of the prime numbers, there is an induced automorphism of $\NN_+^\times$. This in turn induces a topos automorphism of $\setswith{\NN_+^\times}$ (the topos of $\NN_+^\times$-sets and equivariant maps). An important implication is that the topos $\setswith{\NN_+^\times}$ contains no information at all about the Riemann Hypothesis. This is one of the reasons why the tropical semiring (as structure sheaf) is so important in the approach of Connes and Consani. In Subsection \ref{ssec:automorphisms} we compute the topos automorphisms of $\setswith{\M_2^\ns(\ZZ)}$. This topos is much more rigid --- in particular, each automorphism acts trivially on the space of points. So in some sense, the topos $\setswith{\M_2^\ns(\ZZ)}$ contains more arithmetic information than $\setswith{\NN_+^\times}$; and maybe the information in $\setswith{\M_2^\ns(\ZZ)}$ can (partially) replace the role played by the structure sheaf in \cite{ConnesConsani}, leading to an even more ``algebraic'' approach.

An alternative take on this can be found in Subsection \ref{ssec:pz}, where the monoid $\bar{\P}^\ns(\ZZ)$ is studied. It is the submonoid of $\bar{\P}(\ZZ)$ consisting of the matrices with nonzero determinant, where
\begin{equation}
\bar{\P}(R) = \left\{ \begin{pmatrix}
a & b \\
0 & 1
\end{pmatrix} ~:~ a,b \in R \right\}.
\end{equation}
for $R$ a commutative ring. These sets of matrices are used by Connes and Consani in \cite{ConnesConsaniLifts} to study parabolic $\QQ$-lattices. We will show that the topos points of $\setswith{\bar{\P}^\ns(\ZZ)}$ agree with the points of the Arithmetic Site (if we do not take into account the structure sheaf). Moreover, the zeta function naturally associated to $\bar{\P}^\ns(\ZZ)$ is the Riemann zeta function $\zeta(s)$, and the group of topos automorphisms is isomorphic to $\ZZ/2\ZZ$.

In Section \ref{sec:conway}, we discuss the relationship of $\setswith{\M_2^\ns(\ZZ)}$ with Conway's big picture (as introduced in \cite{Conway}). We consider an embedding of the big picture $\mathfrak{P}$ in the quotient
\[
\GL_2(\ZZ)\backslash\M_2^\ns(\ZZ)
\]
(this embedding already appeared in \cite{Plazas}). We give an explicit formula for the hyper-distance on $\GL_2(\ZZ)\backslash\M_2^\ns(\ZZ)$, extending the hyper-distance on the big picture. Then we show that the zeta function associated to the big picture is
\begin{equation}
\zeta_{\fP}(s) = \frac{\zeta(s)\zeta(s-1)}{\zeta(2s)}
\end{equation}
with $\zeta(s)$ the Riemann zeta function. Note that the Riemann zeta function associated to $\GL_2(\ZZ)\backslash\M_2^\ns(\ZZ)$ is $\zeta(s)\zeta(s-1)$. The latter is a special case of a result from \cite{Saito}, but it is also implicit in the work of Connes and Marcolli \cite{ConnesMarcolli}, who showed that $\zeta(s)\zeta(s-1)$ is the partition function for their $\GL_2$-system. Note that $\zeta(s)\zeta(s-1)$ is the Hasse-Weil zeta function for $\PP_\ZZ^1$. This hints at an interpretation of $\setswith{\M_2^\ns(\ZZ)}$ in terms of algebraic geometry.

Another link with the work of Connes and Marcolli \cite{ConnesMarcolli} is that their $\GL_2$-system $\mathcal{A}$ can be interpreted as an algebra of operators on a vector space internal to the topos $\setswith{\M_2^\ns(\ZZ)}$. In more down-to-earth terms: there is some vector space $E$ equipped with a linear left $\M_2^\ns(\ZZ)$-action, such that the $\GL_2$-system acts faithfully on $E$ in an equivariant way. Indeed, take
\begin{equation}
E = \bigoplus_{y} \ell^2(\Gamma \backslash G_y)
\end{equation}
with $y = (\rho,\tau) \in \M_2(\ZZZ)\times\mathfrak{H}$, for $\mathfrak{H}$ the upper half-plane, and \[G_y = \{ g \in \GL_2^+(\QQ) ~:~ g\rho \in \M_2(\ZZZ) \}\]
and let $a \in \mathcal{A}$ act as $\pi_y(a)$ on $\ell^2(\Gamma\backslash G_y)$, where $\pi_y$ is the usual representation as constructed by Connes and Marcolli \cite[Proposition 1.23]{ConnesMarcolli}. Then $\mathcal{A}$ acts in an equivariant way, provided we equip $E$ with the following left $\M_2^\ns(\ZZ)$-action: if $\xi \in \ell^2(\Gamma\backslash G_y)$, then $a \cdot \xi \in \ell^2(\Gamma \backslash G_{y'})$ with $y' = (a \cdot \rho, a \cdot \tau)$ and moreover
\begin{equation}
(a \cdot \xi)(g) = \xi(ga)
\end{equation}
for all $g \in G_{y'}$. Note that $a$ can have negative determinant, but this issue is resolved by considering the identification
\begin{equation}
\GL_2(\ZZ)\backslash\M_2^\ns(\ZZ) = \SL_2(\ZZ)\backslash\M_2^+(\ZZ).
\end{equation}

\section{The topos of \texorpdfstring{$\M_2^\ns(\ZZ)$}{M2ns(Z)}-sets} \label{sec:topos}

Let $\M_2^\ns(\ZZ) = \{ a \in \M_2(\ZZ) : \det(a) \neq 0 \}$ be the regular integral $2 \times 2$-matrices, considered as a monoid under multiplication. We want to construct a topological space $X$ with a surjective open geometric morphism
\begin{equation}
\sh(X) \longrightarrow \setswith{\M_2^\ns(\ZZ)}.
\end{equation}
This map should additionally induce a surjection on topos-theoretic points, leading to a description of the topos-theoretic points for $\setswith{\M_2^\ns(\ZZ)}$.

Two topological spaces for which we can construct a geometric morphism as above (see Subsection \ref{ssec:points}) are
\[
\GL_2(\ZZ) \backslash \M_2^\ns(\ZZ) ~\text{ and }~ \GL_2(\ZZZ) \backslash \M_2(\ZZZ)
\]
with the Scott topology. We will show that the second one is the sobrification of the first one. So they share the same topos of sheaves and the space of points for this topos is  $\GL_2(\ZZZ) \backslash \M_2(\ZZZ)$. We will give a concrete description of both these spaces and relate them to subgroups of $\QQ^2$.

After constructing a covering of toposes
\begin{equation}
\sh\left(\GL_2(\ZZ)\backslash\M_2^\ns(\ZZ)\right) \longrightarrow \setswith{\M_2^\ns(\ZZ)}
\end{equation}
we show that two points of the first topos, seen as elements $a,b \in \GL_2(\ZZZ)\backslash\M_2(\ZZZ)$, give rise to the same topos-theoretic point of $\setswith{\M_2^\ns(\ZZ)}$ if and only if they are $g, h \in \M_2^\ns(\ZZ)$ such that
\begin{equation}
a \cdot g = b \cdot h.
\end{equation}
In this way we can identify the space of points of $\setswith{\M_2^\ns(\ZZ)}$ with
\begin{equation}
\GL_2(\ZZ) \backslash \M_2(\AA_f) \slash \GL_2(\QQ).
\end{equation}

\subsection{Posets of matrices} \label{ssec:posets}

In this subsection we give a concrete description of the posets $\GL_2(\ZZ)\backslash\M_2^\ns(\ZZ)$ and $\GL_2(\ZZZ)\backslash\M_2(\ZZZ)$. So it can be skipped if the reader already has an intuition about these posets and/or has no interest in the proofs of the next subsection.

We already mentioned the topological spaces
\[
\GL_2(\ZZ) \backslash \M_2^\ns(\ZZ) ~\text{ and }~ \GL_2(\ZZZ)\backslash \M_2(\ZZZ)
\]
with the Scott topology. The Scott topology is only defined for posets, so we should clarify which partial order we have in mind. For $a,b \in \M_2^\ns(\ZZ)$ we say $a \leq b$ if there is some $m \in \M_2^\ns(\ZZ)$ with $b = ma$. Clearly, $m = ba^{-1}$ is uniquely determined if it exists. Moreover, $a \leq b$ and $b \leq a$ implies that $b = ua$ for some $u \in \GL_2(\ZZ)$. So we get a partial order on the quotient $\GL_2(\ZZ)\backslash\M_2^\ns(\ZZ)$. The definition for $a,b \in \GL_2(\ZZZ)\backslash\M_2(\ZZZ)$ is completely analogous: $a \leq b$ if there exists an $m \in \M_2(\ZZZ)$ such that $b = ma$. For this to be a partial ordering we need that $a \leq b$ and $b \leq a$ implies $a = b$. This is not clear a priori, but it follows from the concrete description of $\GL_2(\ZZZ)\backslash\M_2(\ZZZ)$ in the following paragraphs.

Note that 
\begin{equation}
\GL_2(\ZZZ)\backslash\M_2(\ZZZ) ~=~ \prod \GL_2(\ZZ_p) \backslash \M_2(\ZZ_p)
\end{equation}
so we can see an element $a \in \GL_2(\ZZZ)\backslash\M_2(\ZZZ)$ as a family
\begin{equation}
(a_p)_p ~\text{ with }a_p \in \GL_2(\ZZ_p)\backslash\M_2(\ZZ_p)\text{ for all primes }p.
\end{equation}
Moreover with this notation we see that $a \leq b$ for $a,b\in\M_2(\ZZZ)$ if and only if $a_p \leq b_p$ for all primes $p$. So we will fix one prime $p$ and look at $\GL_2(\ZZ_p)\backslash\M_2(\ZZ_p)$ in detail.

Because $\ZZ_p$ is a principal ideal domain, every element of $\M_2(\ZZ_p)$ can be brought into its Hermite normal form, by multiplying on the left with elements of $\GL_2(\ZZ_p)$, see \cite[Theorem 22.1]{MacDuffee}. This Hermite normal form is a matrix
\begin{equation}
\begin{pmatrix}
p^k & z \\
0 & p^l
\end{pmatrix}
\end{equation}
with $z = z_0 + z_1 p + z_2 p^2 + \dots$ satisfies $z_i = 0$ for $i \geq l$; but we allow both $k = \infty$ and $l = \infty$ with the convention that $p^\infty = 0$ (in the case $l = \infty$ there is no restriction on $z$). This Hermite normal form is unique whenever the determinant is nonzero (so $k,l$ both finite), see \cite[Theorem 22.2]{MacDuffee}. In this case, we easily find
\begin{equation}
\begin{pmatrix}
p^k & z \\
0 & p^l
\end{pmatrix} \leq \begin{pmatrix}
p^{r} & z' \\
0 & p^{s}
\end{pmatrix}
\end{equation}
if and only if $k \leq r$, $l \leq s$ and $z' \equiv p^{r-k}z \mod p^s$.

Let $a,b \in \GL_2(\ZZ_p)\backslash\M_2(\ZZ_p)$ with nonzero determinant. Then we say that $a$ and $b$ are \emph{adjacent} if
\begin{itemize}
\item $a \leq b$ and $\det(b) = p\det(a)$ (in this case we write $a \to b$);
\item $b \leq a$ and $\det(a) = p\det(b)$ (in this case we write $b \to a$).
\end{itemize}
In this way, we can interpret $\GL_2(\ZZ_p)\backslash\M_2(\ZZ_p)$ as a (directed) graph. Some additional definitions: let $a \in \GL_2(\ZZ_p)\backslash\M_2(\ZZ_p)$, then
\begin{itemize}
\item we define the \emph{level} $\lambda(a)$ as the largest integer such that $p^{\lambda(a)} \mid a$;
\item we define the \emph{niveau} $\nu(a)$ as $\nu(a) = v_p(\det a) - 2\lambda$.
\end{itemize}
If we multiply a matrix by a scalar $n$, then the level increases by $v_p(n)$ and the niveau stays the same. So we could alternatively define the niveau of $a$ as the valuation of the determinant of $\tfrac{1}{N}a$, with $N$ the greatest common divisor for the entries of $a$.

For $a \to b$, we easily compute that either $\lambda(b) = \lambda(a)$ and $\nu(b) = \nu(a) + 1$, or $\lambda(b) = \lambda(a) + 1$ and $\nu(b) = \nu(a) - 1$. Moreover, the latter can only occur for at most one $b$. In Table \ref{table1} we give a complete description of the directed graph structure. Figure \ref{figure1} illustrates the situation for $p=2$. The elements with $\lambda \leq 2$ and $\nu \leq 4$ are drawn with an edge between each two adjacent elements.
\begin{table}[!h]
\bgroup
\renewcommand{\arraystretch}{2}
\scalebox{0.85}{\begin{tabular}{c|c|c|c|c} 
& $\#\left\{\pctext{5em}{$a \to b$ $\lambda(b) = \lambda(a)$} \right\}$
& $\#\left\{\pctext{5em}{$a \to b$ $\lambda(b) > \lambda(a)$} \right\}$
& $\#\left\{\pctext{5em}{$b \to a$ $\lambda(b) = \lambda(a)$} \right\}$
& $\#\left\{\pctext{5em}{$b \to a$ $\lambda(b) < \lambda(a)$} \right\}$
 \\ \hline
\pctext{5em}{$\lambda(a) = 0$ $\nu(a) = 0$} 
& $p+1$ & $0$ & $0$ & $0$ \\ \hline
\pctext{5em}{$\lambda(a) = 0$ $\nu(a) > 0$} 
& $p$ & $1$ & $1$ & $0$ \\ \hline
\pctext{5em}{$\lambda(a) > 0$ $\nu(a) = 0$} 
& $p+1$ & $0$ & $0$ & $p+1$ \\ \hline
\pctext{5em}{$\lambda(a) > 0$ $\nu(a) > 0$} 
& $p$ & $1$ & $1$ & $p$ \\ \hline
\end{tabular}}
\egroup
\caption{Four types in $\GL(\ZZ_p)\backslash\M_2(\ZZ_p)$ and their adjacent elements.}
\label{table1}
\end{table}

\begin{figure}[!h]
\includegraphics[width=0.5\textwidth]{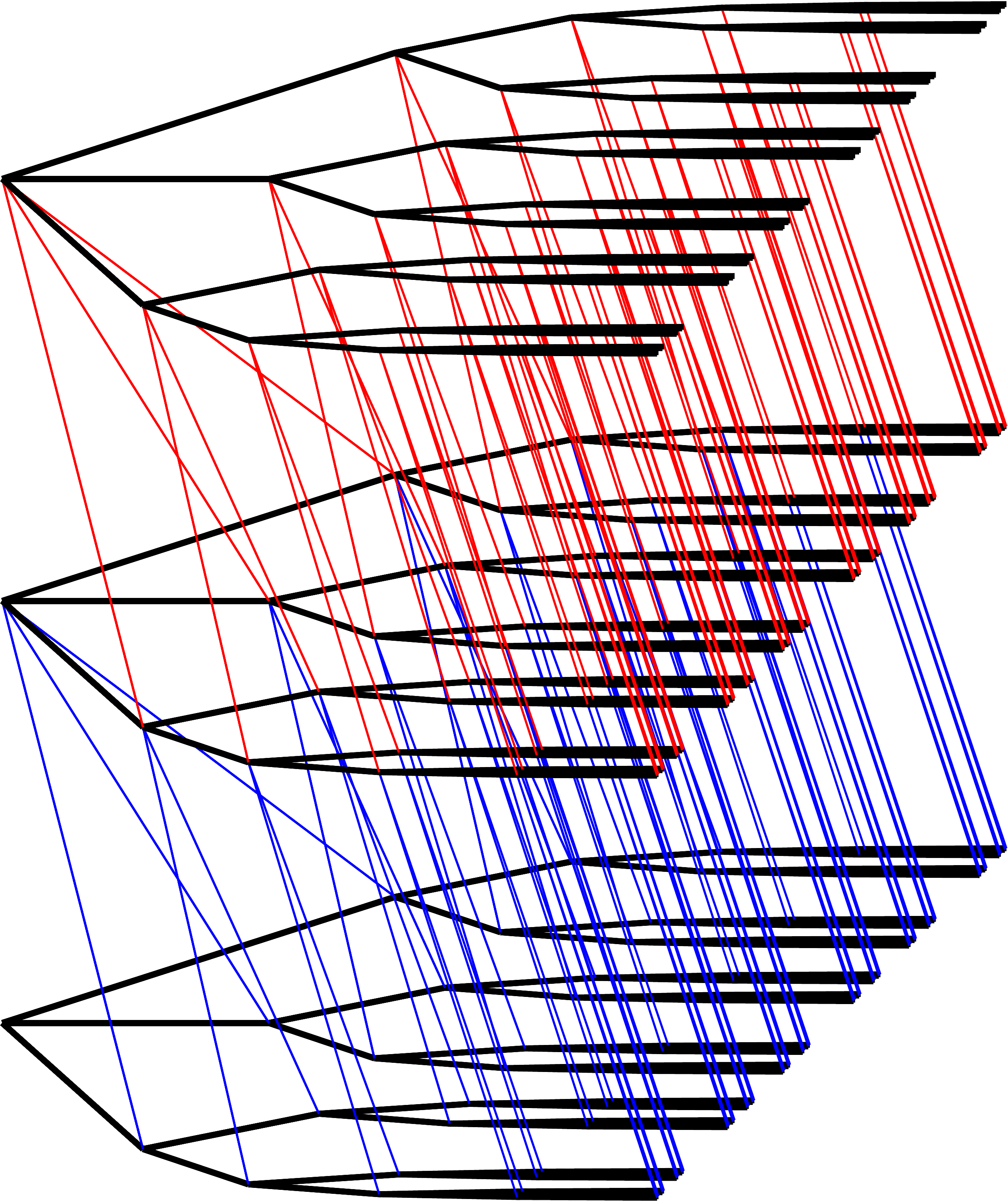}
\caption{A (truncated) picture of $\GL_2(\ZZ_2)\backslash\M_2(\ZZ_2)$.}
\label{figure1}
\end{figure}

For an element of $\GL_2(\ZZ_p)\backslash\M_2(\ZZ_p)$ with zero determinant, the following matrices are unique representatives:
\begin{equation}
\begin{pmatrix}
p^k & z \\
0 & 0
\end{pmatrix} ~\text{ or }~ \begin{pmatrix}
0 & 0 \\
0 & p^l
\end{pmatrix}
\end{equation}
for some $k \in \{0,1,2,3,\dots\}$ and $z\in\ZZ_p$, or $l \in \{0,1,2,\dots\}\cup\{\infty\}$. The poset structure on the latter matrices can be summarized as
\begin{equation*}
\begin{pmatrix}
p^k & z \\
0 & 0
\end{pmatrix} \leq \begin{pmatrix}
p^{k+r} & p^rz \\
0 & 0
\end{pmatrix} \leq \begin{pmatrix}
0 & 0 \\
0 & 0
\end{pmatrix} ~\text{ and }~ \begin{pmatrix}
0 & 0 \\
0 & p^l
\end{pmatrix} \leq \begin{pmatrix}
0 & 0 \\
0 & p^{l+s}
\end{pmatrix} \leq \begin{pmatrix}
0 & 0 \\
0 & 0
\end{pmatrix}.
\end{equation*}
We define the level $\lambda(a)$ again as the largest integer such that $p^{\lambda(a)} \mid a$, or $\lambda(a) = \infty$ for $a$ the zero matrix. For any matrix with zero determinant we write $\nu(a) = \infty$.

It is easy to see from the explicit representatives above that there is a bijection between the nonzero elements with zero determinant and the ``paths'' in $\GL_2(\ZZ_p)\backslash\M_2(\ZZ_p)$, where by ``path'' we mean a subset $\{ a_n \}_{n \in \NN}$ with $\lambda(a_n)=\lambda(a_0)$, $\nu(a_n) = n$ and $a_{n} \leq a_{n+1}$ for all $n \in \NN$. The path associated to $b$ with $\det(b) = 0$ is explicitly given by
\begin{equation}
\{ a \leq b ~:~ \lambda(a) = \lambda(b)  \}.
\end{equation}

This finishes our description of the poset $\GL_2(\ZZ_p) \backslash \M_2(\ZZ_p)$. Note that the determinant map
\begin{equation}
\GL_2(\ZZ_p) \backslash \M_2(\ZZ_p) \longrightarrow \ZZ_p^\times \backslash \ZZ_p
\end{equation}
is also easy to visualize when keeping Figure \ref{figure1} in mind. Moreover, we can identify $\ZZ_p^\times \backslash \ZZ_p$ with $\NN \cup \{\infty\}$ using the map
\begin{equation}
\xi:\NN \cup \{\infty\} \longrightarrow \ZZ_p^\times \backslash \ZZ_p, \qquad n \mapsto p^n
\end{equation}
with the convention $p^\infty = 0$. Note that $\xi(n+m) = \xi(n)\xi(m)$. Taking the product over all primes $p$ gives a description of $\GL_2(\ZZZ)\backslash\M_2(\ZZZ)$ and the corresponding determinant map
\begin{equation*} \label{eq:determinant-map}
\GL_2(\ZZZ)\backslash\M_2(\ZZZ) \longrightarrow \ZZZ^\times\backslash\ZZZ.
\end{equation*}
Here $\ZZZ^\times\backslash\ZZZ$ can be identified with the Steinitz numbers or supernatural numbers
\begin{equation}
\SS = \prod_p \NN\cup\{\infty \}.
\end{equation}
Note that the supernatural numbers $\SS$ turn up as covering for Connes and Consani's Arithmetic Site, see \cite{llb-covers}.

\subsection{The Scott topology} We want to interpret the posets from the previous subsection as topological spaces. The right notion in this setting is the Scott topology.

\begin{definition}[\hspace{-0.1mm}{\cite[Definition 1.3]{Scott}}] \label{def:scott}
Let $X$ be a poset. Then a subset $S \subseteq X$ is open for the \emph{Scott topology} if
\begin{itemize}
\item $S$ is upwards closed (if $x \in S$ and $x \leq y$ then $y \in S$);
\item $S$ is inaccesibly by directed suprema (for any directed supremum $\bigvee_{i \in I} s_i \in S$ we can find an $s_i \in S$).
\end{itemize}
\end{definition}
Note that the Scott topology is originally defined only for complete lattices. We drop this requirement, following e.g.\ \cite{AlHanifi} and \cite{Martin}. One caveat with this more general definition is that it is not functorial: the map from $\NN\cup\{\infty\}$ to $\{0,1\}$ sending natural numbers to zero and $\infty$ to one is order-preserving but not continuous for the Scott topology. This problem does not appear for complete lattices if you require the maps to preserve suprema.

\begin{proposition}
Consider $X = \GL_2(\ZZ)\backslash\M_2^\ns(\ZZ)$ and $\hat{X} = \GL_2(\ZZZ)\backslash\M_2(\ZZZ)$ with the Scott topology. Then:
\begin{enumerate}
\item the open sets for $X$ are precisely the upwards closed sets;
\item the inclusion $X \subset \hat{X}$ is continuous;
\item the inclusion $X \subset \hat{X}$ induces an isomorphism on frames of opens;
\item $X$ is dense in $\hat{X}$;
\item $\hat{X}$ is the sobrification of $X$.
\end{enumerate}
\end{proposition}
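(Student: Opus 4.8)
The plan is to identify $\hat X=\GL_2(\ZZZ)\backslash\M_2(\ZZZ)$ with an \emph{algebraic dcpo} --- a directed-complete poset in which every element is the directed supremum of the compact elements below it --- whose subposet of compact elements is precisely $X=\GL_2(\ZZ)\backslash\M_2^\ns(\ZZ)$. Granting this, (2)--(5) become formal. Indeed, for an algebraic dcpo $D$ with subposet of compacts $K$: a directed $E\subseteq K$ whose supremum lies in $K$ has that supremum as a maximum (compactness of $\bigvee E$ gives $\bigvee E\le e$ for some $e\in E$, and $e\le\bigvee E$), so the Scott opens of $K$ are exactly the upper sets; intersecting with $K$ is a frame isomorphism from the Scott opens of $D$ onto the upper sets of $K$, with inverse $U\mapsto{\uparrow}U$; the inclusion $K\hookrightarrow D$ is continuous and dense; and $D$ is sober in the Scott topology (every continuous dcpo is). Since a continuous map of sober spaces inducing a frame isomorphism is a homeomorphism, the universal property of the sobrification then identifies $D$ with the sobrification of $K$. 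So the real content is (1), the order-compatibility behind (2), and the structural claim about $\hat X$.

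For (1): let $D\subseteq X$ be directed with supremum $s\in X$. For $d\in D$ we have $s=md$ with $m\in\M_2^\ns(\ZZ)$, so $\det d\mid\det s$; hence $\{\,|\det d| : d\in D\,\}$ is directed upward (as $d\mapsto|\det d|$ is monotone) and bounded by $|\det s|$, so it has a greatest element, attained at some $d_0\in D$. Given $d\in D$, choose $d'\in D$ above both $d$ and $d_0$; then $|\det d'|=|\det d_0|$ together with $d_0\le d'$ forces $d'=d_0$ in $X$, so $d\le d_0$. Thus $s=d_0\in D$, and every upper set containing $s$ meets $D$: the Scott opens of $X$ are exactly the upper sets. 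For (2) it suffices that the order on $X$ is the restriction of that on $\hat X$: if $a,b\in\M_2^\ns(\ZZ)$ and $b=ma$ with $m\in\M_2(\ZZZ)$, then $m=ba^{-1}\in\M_2(\QQ)\cap\M_2(\ZZZ)=\M_2(\ZZ)$ with $\det m\neq 0$, so $a\le b$ already in $X$; the same computation shows $X\to\hat X$ is injective. The preimage in $X$ of a Scott open of $\hat X$ is then an upper set, hence Scott open by (1).

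For the structural claim one works prime by prime. Sending $a$ to $\{\,v\in\QQ_p^2 : av\in\ZZ_p^2\,\}$ gives an order isomorphism of $\GL_2(\ZZ_p)\backslash\M_2(\ZZ_p)$ with the poset of $\ZZ_p$-submodules of $\QQ_p^2$ containing $\ZZ_p^2$ ordered by inclusion, under which the regular matrices correspond to the finitely generated such submodules; this poset is directed-complete (directed unions of submodules) and algebraic, with the finitely generated submodules as compacts. Equivalently this can be read off the Hermite-normal-form description of Subsection \ref{ssec:posets}: a directed set either has bounded determinant valuation, and then a maximum, or is cofinal in a path, and then its supremum is the corresponding singular matrix. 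Now a product of dcpos with least elements is again a dcpo, algebraic when the factors are, and its compacts are exactly the tuples that are compact in every coordinate and equal to the least element $\mathrm{id}$ in all but finitely many coordinates; hence $\hat X=\prod_p\GL_2(\ZZ_p)\backslash\M_2(\ZZ_p)$ is an algebraic dcpo whose compacts are the tuples of regular matrices that are trivial at almost all primes. It then remains to match these with $X$: a family of lattices $\ZZ_p^2\subseteq M_p\subseteq\QQ_p^2$ with $M_p=\ZZ_p^2$ for almost all $p$ comes from a single matrix in $\M_2^\ns(\ZZ)$, by a standard local-to-global argument (prescribing Hermite forms at the finitely many relevant primes). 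With that, the compacts of $\hat X$ are the image of $X$, and (3), (4), (5) follow from the general facts in the first paragraph, using (1) to identify $\Oscr(X)$ with the upper sets.

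The step I expect to need the most care is exactly this last matching: verifying that the compact elements of $\prod_p\GL_2(\ZZ_p)\backslash\M_2(\ZZ_p)$ are precisely the image of $X$. It couples the purely local statement that a regular matrix over $\ZZ_p$ is a compact element with the global constraint --- trivial at almost every prime --- that cuts $X$ out of $\hat X$, and it is where the local-to-global (approximation) input for integral matrices enters. Everything else is either an elementary determinant estimate or an appeal to standard domain theory.
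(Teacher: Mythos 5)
Your proposal is correct, but it takes a genuinely different route from the paper. The paper works directly with the Hermite-normal-form description: it proves (1) by the same determinant-maximum argument you give, proves (3) by an explicit case analysis at each prime using the level and niveau functions (showing an upward set generated by elements of $X$ is inaccessible by directed suprema, and conversely that every Scott open of $\hat{X}$ is generated by its trace on $X$), and proves (5) by exhibiting a generic point for each irreducible closed subset of $\GL_2(\ZZ_p)\backslash\M_2(\ZZ_p)$ and citing that arbitrary products of sober spaces are sober. You instead package the whole statement in domain-theoretic terms: identify each local factor with the inclusion-ordered poset of $\ZZ_p$-submodules $\ZZ_p^2 \subseteq M \subseteq \QQ_p^2$ (equivalently, duals of row modules), recognize it as an algebraic dcpo with the finitely generated lattices as compacts, note that a product of such is again an algebraic dcpo whose compacts are the almost-everywhere-trivial tuples of compacts, match these with $X$ by a local-to-global (CRT/Hermite) argument, and then quote standard facts --- Scott opens of an algebraic dcpo are determined by their traces on the compacts, continuous dcpos are sober --- to get (3)--(5). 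This buys conceptual clarity and eliminates the paper's case analysis, at the cost of the extra structural inputs (the order isomorphism with submodule posets, the description of compacts in a product, the compacts-equal-$X$ matching you rightly flag as the delicate step, and citations to the domain-theory literature, e.g.\ the Compendium the paper already references); the paper's proof is longer but entirely self-contained and reads everything off the normal form. Two small points of care: your general claim that Scott opens of the subposet of compacts are exactly the upper sets has a hidden subtlety (a directed set in $X$ could a priori have a least upper bound in $X$ differing from its supremum in $\hat{X}$), but your independent determinant proof of (1) disposes of this; and in the last step the phrase ``continuous map of sober spaces'' should be read as applying to the induced map from the sobrification of $X$ to $\hat{X}$, since $X$ itself is not sober --- with that reading the universal-property argument for (5) is exactly parallel to the paper's conclusion.
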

\begin{proof} \ 
\begin{enumerate}
\item We have to show that the second part of Definition \ref{def:scott} is automatically satisfied for every upwards closed subset of $X$. So take an upwards closed set $S \subseteq X$ and suppose that $s \in S$ for some $s = \bigvee_{i \in I} s_i$. Then \[\det(s) = \max \{ \det(s_i) : i \in I \}.\]
Take $i \in I$ with $\det(s_i) = \det(s)$, then $s_i = s$ so in particular $s_i \in S$.
\item This follows almost directly from the first statement.
\item We need to prove that the open sets for $\hat{X}$ are the upwards closed sets that are generated by elements of $X$. So take an upwards closed subset $S \subset \hat{X}$ generated by $(g_i)_{i \in I} \in X$. Suppose that $s \in S$ for $s = \bigvee_{j \in J} s_j$. Then there is some $i \in I$ with $g_i \leq s$. Consider the projection
\begin{equation*}
F_p : \GL_2(\ZZZ)\backslash\M_2(\ZZZ) \longrightarrow \GL_2(\ZZ_p)\backslash\M_2(\ZZ_p).
\end{equation*}
Note that the maps $F_p$ preserve suprema \cite[Exercise 3.21.n]{Schechter}. It is enough to find for each prime $p$ some $j \in J$ with $F_p(g_i) \leq F_p(s_j)$, because $F_p(g_i)$ is the identity matrix for almost all primes $p$. We use the abbreviations $\lambda_p$ and $\nu_p$ for $\lambda \circ F_p$ resp.\ $\nu \circ F_p$ with $\lambda$ and $\nu$ the level resp.\ niveau function (see Subsection \ref{ssec:posets}). For each prime $p$ there are three cases.
\begin{itemize}
\item \underline{$\lambda_p(s) = \infty$.} Then we can find an $s_j$ with $\lambda_p(s_j) \geq v_p(\det(g_i))$, but this implies $F_p(g_i) \leq F_p(s_j)$ because $g_i \leq \det(g_i)I_2$ with $I_2$ the identity matrix.
\item \underline{$\lambda_p(s)$ finite, $\nu_p(s) = \infty$.} Without loss of generality, $\lambda_p(g_i) = \lambda_p(s)$. Now take $j \in J$ with $\lambda_p(s_j) = \lambda_p(s)$ and $\nu_p(s_j) \geq \nu_p(g_i)$. Then $F_p(g_i) \leq F_p(s_j)$.
\item \underline{$\lambda_p(s)$ and $\nu_p(s)$ both finite.} Then we can find $j \in J$ with $F_p(s_j) = F_p(s)$, but then also $F_p(g_i) \leq F_p(s_j)$.
\end{itemize}
This shows that $S$ is Scott open. Conversely, let $T \subseteq \hat{X}$ be a Scott open set. Then it is upwards closed, and we need to prove that it is generated by $T \cap X$. Take $s \in T$. Then it is easy to see that $s = \bigvee_{i \in \NN} s_i$ with each $s_i \in X$. Because $T$ is Scott open, we see that $s_i \in T$ for some $i \in \NN$. But now $s_i \in T \cap X$ and $s_i \leq s$.
\item This follows directly from the previous statement.
\item We already know that the inclusion $X \subset \hat{X}$ is continuous and induces an isomorphism on frames of opens. It remains to show that $\hat{X}$ is sober (in other words every irreducible closed subset is the closure of a unique point). It is enough to show this for $\GL_2(\ZZ_p)\backslash\M_2(\ZZ_p)$ with the Scott topology for each prime $p$, because arbitrary products of sober spaces are again sober, see \cite{product-of-sober}. Let $V \subseteq \GL_2(\ZZ_p)\backslash\M_2(\ZZ_p)$ be an irreducible closed subset. In particular it is downwards closed. We claim that it has a unique maximal element $m$, in other words that
\begin{equation*}
V = \{ s \in \GL_2(\ZZ_p)\backslash\M_2(\ZZ_p): s \leq m \}.
\end{equation*}
In the case that $\lambda(s)$ for $s \in V$ is unbounded, it is easy to see that $V$ is the whole space. If $\lambda(s)$ is bounded, write $\lambda_0$ for the maximal value. It is enough to prove that $\lambda(s)=\lambda(s')=\lambda_0$ and $\nu(s) = \nu(s')$ implies $s = s'$ for $s,s' \in V$. But note that $U=\{ v \in V : s \leq v  \}$ and $U'=\{ v \in V:s' \leq v \}$ are two disjoint relatively open sets, which means we can write
\begin{equation*}
V = (V\backslash U) \cup (V\backslash U').
\end{equation*}
This contradicts $V$ being irreducible.
\end{enumerate}
\end{proof}

\begin{corollary}
There is an equivalence of categories
\begin{equation*}
\sh(X) \simeq \sh(\hat{X}).
\end{equation*}
The space of points for this topos is $\hat{X}$.
\end{corollary}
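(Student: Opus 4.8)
The plan is to derive both assertions from the Proposition together with the standard fact that, for a topological space $Y$, the topos $\sh(Y)$ depends only on the frame $\Oscr(Y)$ of open subsets of $Y$: the canonical geometric morphism exhibits an equivalence $\sh(Y)\simeq\sh(\Oscr(Y))$ between sheaves on the space and sheaves on the associated locale (see Mac Lane--Moerdijk, \emph{Sheaves in Geometry and Logic}, Ch.~IX, or Johnstone's \emph{Stone Spaces}, Ch.~II). First I would record the equivalence. By part (3) of the Proposition the inclusion $X\hookrightarrow\hat{X}$ induces an isomorphism of frames $\Oscr(\hat{X})\xrightarrow{\ \sim\ }\Oscr(X)$, and by part (2) it is continuous, hence induces a geometric morphism $\sh(X)\to\sh(\hat{X})$. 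Passing to locales, this geometric morphism is identified with the one induced by the frame isomorphism, so it is an equivalence:
\[
\sh(X)\;\simeq\;\sh(\Oscr(X))\;\simeq\;\sh(\Oscr(\hat{X}))\;\simeq\;\sh(\hat{X}).
\]

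It then remains to identify the space of points. Up to isomorphism, the points of the topos $\sh(\hat{X})$ are the points of the locale $\Oscr(\hat{X})$, i.e.\ the frame homomorphisms $\Oscr(\hat{X})\to\{0<1\}$, equivalently the completely prime filters of open sets; these form a topological space $\mathrm{pt}(\Oscr(\hat{X}))$ with the topology generated by the sets $\{\fp : U\in\fp\}$ for $U\in\Oscr(\hat{X})$. There is always a continuous unit map $\hat{X}\to\mathrm{pt}(\Oscr(\hat{X}))$ sending a point to its neighbourhood filter, and sobriety of a space is exactly the statement that this map is a homeomorphism. By part (5) of the Proposition $\hat{X}$ is sober, so the space of points of $\sh(\hat{X})\simeq\sh(X)$ is canonically $\hat{X}$ itself.

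There is essentially no obstacle here beyond careful bookkeeping: the substantive input — the identification of $\Oscr(X)$ with $\Oscr(\hat{X})$ and the sobriety of $\hat{X}$ — has already been established in the Proposition, and what remains is an application of the general machinery relating sober spaces, locales and their sheaf toposes. The one point worth stating explicitly is which geometric morphism realizes the equivalence (namely the one induced by the dense inclusion $X\subset\hat{X}$), and that ``space of points'' is meant in the usual topologized sense, so that sobriety of $\hat{X}$ is precisely what upgrades the bijection on underlying sets to a homeomorphism of spaces. Equivalently, and perhaps more cleanly, one may observe that the functor $\sh(-)$ from topological spaces to toposes factors through the category of locales and therefore inverts the sobrification map $X\to\hat{X}$ of part (5), which yields both statements at once.
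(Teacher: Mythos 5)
Your proposal is correct and is exactly the argument the paper intends: the corollary is stated without proof as an immediate consequence of the Proposition, via the standard facts that $\sh(-)$ depends only on the frame of opens (so part (3) gives the equivalence) and that for a sober space the points of the locale, hence of the topos, are the points of the space (so part (5) identifies the space of points with $\hat{X}$). Your write-up simply makes this implicit reasoning explicit, with no divergence from the paper's route.
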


\subsection{Topos-theoretic points for \texorpdfstring{$\setswith{\M_2^\ns(\ZZ)}$}{M2ns(Z)-sets}} \label{ssec:points}

We claim that $\sh(X)$ is a slice topos for $\setswith{\M_2^\ns(\ZZ)}$. We can see $\setswith{\M_2^\ns(\ZZ)}$ as the category of presheaves on the category $\mathtt{M}^\op$ with one object $\ast$ and as morphisms the opposite monoid $\M_2^\ns(\ZZ)^\op$ (composition given by multiplication). Now consider the slice category $\mathtt{M}^\op\slash\ast$. Upto equivalence of categories, we can describe it as follows:
\begin{itemize}
\item The objects are the elements of $X = \GL_2(\ZZ)\backslash\M_2^\ns(\ZZ)$;
\item There is an arrow $n \to m$ whenever $m \leq n$.
\end{itemize}
We can also interpret $\mathtt{M}^\op\slash\ast$ as the category of open sets in $X$ that are generated by one element (and inclusions). Now note that the latter is a basis for the (Scott) topology on $X$. Moreover, there are no nontrivial ways to cover one basis open set with other basis open sets. This brings us to the conclusion
\begin{equation}
\sh(X) \simeq \psh(\mathtt{M}^\op\slash\ast)
\end{equation}
Moreover, the object $\ast$ in $\mathtt{M}^\op$ represents the $\M_2^\ns(\ZZ)$-set $M = \M_2^\ns(\ZZ)$ with left action given by multiplication. So we have an equivalence
\begin{equation}
\psh(\mathtt{M}^\op\slash\ast) ~\simeq~ \setswith{\M_2^\ns(\ZZ)}\slash M.
\end{equation}
Because we see $\sh(X)$ as a slice topos for $\setswith{\M_2^\ns(\ZZ)}$, there is an induced geometric morphism
\begin{equation}
\begin{tikzcd}
\sh(X) \ar[r,"{\pi_*}",bend right] & \setswith{\M_2^\ns(\ZZ)} \ar[l,bend right,"{\pi^*}"'].
\end{tikzcd}
\end{equation}
This geometric morphism induces a continuous map from the space of points $\hat{X}$ of $\sh(X)$ to the space of points of $\setswith{\M_2^\ns(\ZZ)}$. Moreover, by \cite[Observation 3.10]{nLab-slice-topos}, there is a point of $\sh(X)$ for every couple $(p,x)$ with $p$ a point of $\setswith{\M_2^\ns(\ZZ)}$ and $x \in p^*M$, and this point is sent to $p$ under the above map. In particular the map 
\begin{equation}
\pi : \hat{X} \to \mathsf{Pts}(\setswith{\M_2^\ns(\ZZ)})
\end{equation}
is surjective, if we prove that $p^*M$ is nonempty for every point $p$ of $\setswith{\M_2^\ns(\ZZ)}$. So suppose that $p^*M = \varnothing$. Then by the adjunction property $p_*S$ is the one-element set $1$ for each set $S$. From this it follows that $p^*1 = \varnothing$, but this contradicts $p^*$ preserving finite limits. As a conclusion:
\begin{proposition}
There is a surjective continuous map
\[
\pi: \hat{X} \relbar\joinrel\twoheadrightarrow \mathsf{Pts}(\setswith{\M_2^\ns(\ZZ)}).
\]
\end{proposition}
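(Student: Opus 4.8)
The plan is to recognise $\sh(X)$ as the \emph{slice topos} $\setswith{\M_2^\ns(\ZZ)}\slash M$, where $M=\M_2^\ns(\ZZ)$ carries its left regular action, and then to read off the map on points from the general behaviour of slice toposes. First I would observe that $\setswith{\M_2^\ns(\ZZ)}$ is the category of presheaves on the one-object category $\mathtt{M}^\op$ with endomorphism monoid $\M_2^\ns(\ZZ)^\op$, so its slice over the representable object $\ast$ is the presheaf topos on $\mathtt{M}^\op\slash\ast$. Up to equivalence $\mathtt{M}^\op\slash\ast$ is exactly the poset $X=\GL_2(\ZZ)\backslash\M_2^\ns(\ZZ)$ with the divisibility order, realised as the poset of principal up-sets; since by part (1) of the previous Proposition the Scott topology on $X$ is the Alexandrov topology and these principal up-sets form a basis admitting no nontrivial covers, one obtains $\sh(X)\simeq\psh(\mathtt{M}^\op\slash\ast)\simeq\setswith{\M_2^\ns(\ZZ)}\slash M$.

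Next I would invoke the standard fact that the forgetful functor $\setswith{\M_2^\ns(\ZZ)}\slash M\to\setswith{\M_2^\ns(\ZZ)}$ is the inverse-image part of an (\'etale) geometric morphism $\pi$, giving the adjoint pair $(\pi^*,\pi_*)$, and that a geometric morphism induces a continuous map on spaces of points. Combined with the earlier Corollary, which identifies the space of points of $\sh(X)$ with $\hat X$, this produces the continuous map $\pi\colon\hat X\to\mathsf{Pts}(\setswith{\M_2^\ns(\ZZ)})$.

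For surjectivity I would use the description of points of a slice topos: a point of $\mathcal E\slash M$ is a pair $(p,x)$ with $p$ a point of $\mathcal E$ and $x\in p^*M$, sitting over $p$ (this is \cite[Observation 3.10]{nLab-slice-topos}). Hence it suffices to check that $p^*M$ is inhabited for every point $p$ of $\setswith{\M_2^\ns(\ZZ)}$. This is forced by left-exactness of $p^*$: the monoid $M$ has an element (the identity matrix), hence a global section $1\to M$; applying $p^*$ and using that $p^*$ preserves the terminal object gives a map $1\to p^*M$, so $p^*M\neq\varnothing$. Equivalently, as in the discussion above the statement, if $p^*M=\varnothing$ then $p_*S\cong 1$ for every set $S$ by adjunction, whence $p^*1\cong\varnothing$, contradicting preservation of finite limits. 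Therefore $\pi$ is surjective.

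The only genuinely non-formal ingredient is the equivalence $\sh(X)\simeq\psh(\mathtt{M}^\op\slash\ast)$: it rests on the Scott topology on $X$ agreeing with the Alexandrov topology, i.e.\ on part (1) of the previous Proposition, which in turn uses finiteness of the fibres of the determinant on $\GL_2(\ZZ)\backslash\M_2^\ns(\ZZ)$ so that directed suprema are attained. Everything after that is formal topos theory, so I would expect this identification, together with keeping track of which object one slices over, to be the main point to get right.
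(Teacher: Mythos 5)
Your route is the same as the paper's: identify $\sh(X)$ with the slice $\setswith{\M_2^\ns(\ZZ)}\slash M$ via $\sh(X)\simeq\psh(\mathtt{M}^\op\slash\ast)$, get the geometric morphism from the slice, use \cite[Observation 3.10]{nLab-slice-topos} to describe points of the slice as pairs $(p,x)$ with $x\in p^*M$, and reduce surjectivity to showing $p^*M\neq\varnothing$ for every point $p$. Your closing adjunction argument (if $p^*M=\varnothing$ then $p_*S\cong 1$ for all $S$, hence $p^*1\cong\varnothing$, contradicting left exactness of $p^*$) is exactly the paper's argument, and your remark that the real content lies in the equivalence $\sh(X)\simeq\psh(\mathtt{M}^\op\slash\ast)$, resting on part (1) of the preceding Proposition, matches the paper's setup.

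One step you offer is wrong, though it is harmless because of your fallback: the identity matrix does \emph{not} give a global section $1\to M$ in $\setswith{\M_2^\ns(\ZZ)}$. A morphism $1\to M$ in this topos is an equivariant map, i.e.\ a fixed point of the left regular action, and since $\M_2^\ns(\ZZ)$ is cancellative and nontrivial, $am=m$ forces $a=1$, so $M$ has no global sections at all; consequently there is no map $1\to p^*M$ obtained by applying $p^*$ to such a section. The two arguments you present as ``equivalent'' are therefore not equivalent: only the adjunction one is valid (and it is the one the paper uses). If you want a genuinely direct substitute, note instead that $M\to 1$ is an epimorphism and that $p^*$, being a left adjoint which preserves the terminal object, sends it to an epimorphism $p^*M\to 1$ of sets, which already forces $p^*M\neq\varnothing$.
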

Now we need to determine when $\pi(a) = \pi(b)$ for $a,b \in \hat{X}$. We start by giving a description of the map $\pi$. Let $x \in \hat{X}$. Then for a sheaf $\F$ on $X$, we get $x^*\F = \varprojlim_{x \in U} \F(U)$. Clearly, it is enough to take the projective limit over only the open sets $U$ generated by one element. Note that $\pi(x)^* = x^*\circ \pi^*$. Moreover, $\pi^*$ is given by
\begin{equation}
(\pi^*N)(U) = N
\end{equation}
for $N$ an arbitrary $\M_2^\ns(\ZZ)$-set. Further, if $U$ is generated by $m$ and $V$ by $n = am$, then the restriction $(\pi^*N)(U) \to (\pi^*N)(V)$ is given by left multiplication with $a$.

This leads to the following description of $\pi(x)$: for each $\M_2^\ns(\ZZ)$-set $N$,
\begin{equation}
\pi(x)^*N = \varinjlim_{m \leq x} N,
\end{equation}
where the transition map along $m \leq m'$, with $m' = am$, is given by multiplication by $a$. Note that the transition maps are not necessarily equivariant (we take the projective limit as sets).

\begin{theorem} \label{thm:points}
The points of $\setswith{\M_2^\ns(\ZZ)}$ are given by
\[
\GL_2(\ZZZ)\backslash\M_2(\AA_f)\slash\GL_2(\QQ)
\]
where $\AA_f = \ZZZ \otimes \QQ$ are the finite adeles. Moreover, there are in bijective correspondence with the isomorphism classes of subgroups $\ZZ^2 \subseteq A \subseteq \QQ^2$.
\end{theorem}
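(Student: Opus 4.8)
The plan is to combine the surjection $\pi\colon\hat{X}\twoheadrightarrow\mathsf{Pts}(\setswith{\M_2^\ns(\ZZ)})$ with an explicit computation of its fibres, and then to identify the resulting quotient both with the double quotient and with subgroups of $\QQ^2$. Recall that a topos point $p$ of $\setswith{\M_2^\ns(\ZZ)}$ is recovered from the set $p^*M$ together with the right $\M_2^\ns(\ZZ)$-action induced by right multiplication on $M=\M_2^\ns(\ZZ)$ (the points are the flat right $\M_2^\ns(\ZZ)$-sets, realised as $p^*M$). So I would first make $\pi(a)^*M$ explicit for $a\in\hat{X}$ with representative $x_a\in\M_2(\ZZZ)$: since the principal opens ${\uparrow}m$ with $m\in X$, $m\leq a$, are cofinal among the opens of $\hat{X}$ containing $a$, the index poset of $\varinjlim_{m\leq a}M$ is directed, and as the transition maps are left multiplications the rule $[m,g]\mapsto m^{-1}g$ gives an injection $\pi(a)^*M\hookrightarrow\GL_2(\QQ)$. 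I claim it is a bijection onto
\[
T_a\;:=\;\{\gamma\in\GL_2(\QQ)\;:\;x_a\gamma\in\M_2(\ZZZ)\};
\]
only surjectivity needs an argument: for $\gamma\in T_a$ the lattice $\Lambda:=\ZZ^2+\gamma\ZZ^2$ contains $\ZZ^2$ and $\gamma\ZZ^2$ and satisfies $x_{a,p}\Lambda_p\subseteq\ZZ_p^2$ for every $p$ (because $x_{a,p}\gamma\in\M_2(\ZZ_p)$), so writing $\Lambda=m^{-1}\ZZ^2$ with $m\in\M_2^\ns(\ZZ)$ gives $m\leq a$ and $g:=m\gamma\in\M_2^\ns(\ZZ)$ with $m^{-1}g=\gamma$. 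Under this identification the $\M_2^\ns(\ZZ)$-action on $\pi(a)^*M$ becomes right multiplication on $T_a\subseteq\GL_2(\QQ)$.

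Next I would establish the criterion $\pi(a)=\pi(b)\iff a\cdot g=b\cdot h$ in $\hat{X}$ for some $g,h\in\M_2^\ns(\ZZ)$. If $x_ag=ux_bh$ with $u\in\GL_2(\ZZZ)$ then $T_a=gh^{-1}T_b$, and left multiplication by $gh^{-1}$ is an isomorphism of right $\M_2^\ns(\ZZ)$-sets, so $\pi(a)=\pi(b)$. Conversely, an isomorphism $\psi\colon T_a\xrightarrow{\ \sim\ }T_b$ of right $\M_2^\ns(\ZZ)$-sets must be left multiplication by $\delta:=\psi(I_2)$: indeed $I_2\in T_a$ since $x_a\in\M_2(\ZZZ)$, and for $\gamma\in T_a$, picking $d\in\ZZ_{>0}$ with $d\gamma\in\M_2^\ns(\ZZ)$ and using equivariance on the two ways of writing $d\gamma=\gamma\cdot(dI_2)=I_2\cdot(d\gamma)$ gives $d\,\psi(\gamma)=\psi(d\gamma)=d\,\delta\gamma$, hence $\psi(\gamma)=\delta\gamma$. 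Now $\GL_2(\QQ)$ is dense in $\M_2(\AA_f)$ and $\mathcal N_a:=\{y\in\M_2(\AA_f):x_ay\in\M_2(\ZZZ)\}$ is open, so $T_a$ is dense in $\mathcal N_a$; taking closures in $\delta T_a=T_b$ gives $\delta\mathcal N_a=\mathcal N_b$, and since $x$ is recovered from $\mathcal N$ via $\{z\in\M_2(\AA_f):z\mathcal N\subseteq\M_2(\ZZZ)\}=\M_2(\ZZZ)x$, one gets $x_a\delta^{-1}\in\GL_2(\ZZZ)x_b$; clearing denominators of $\delta^{-1}$ produces the required $g,h$. (When $\det x_a=0$ the last step needs a separate, more concrete argument, since then $\mathcal N_a$ only retains a rank $\leq1$ datum of $x_a$.)

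Combining the two steps, $\mathsf{Pts}(\setswith{\M_2^\ns(\ZZ)})$ is the quotient of $\hat{X}=\GL_2(\ZZZ)\backslash\M_2(\ZZZ)$ by the relation $a\cdot g=b\cdot h$; every $\GL_2(\QQ)$-orbit in $\GL_2(\ZZZ)\backslash\M_2(\AA_f)$ meets $\hat{X}$ (clear denominators by a scalar), and for $a,b\in\hat{X}$ the relation $a\cdot g=b\cdot h$ with $g,h\in\M_2^\ns(\ZZ)$ says precisely that $x_a,x_b$ lie in one class of $\GL_2(\ZZZ)\backslash\M_2(\AA_f)\slash\GL_2(\QQ)$; this identifies the points with the asserted double quotient. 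For the final claim: a subgroup $\ZZ^2\subseteq A\subseteq\QQ^2$ is the same as a family $(A_p)_p$ with $\ZZ_p^2\subseteq A_p\subseteq\QQ_p^2$ (via $A_p=A\otimes\ZZ_p$ and $A=\QQ^2\cap\prod_pA_p$), and by the classification of $\ZZ_p$-submodules of $\QQ_p^2$ containing $\ZZ_p^2$ --- equivalently the Hermite normal form description of $\GL_2(\ZZ_p)\backslash\M_2(\ZZ_p)$ from Subsection~\ref{ssec:posets} --- such an $A_p$ corresponds bijectively to the class of the matrix $x_{a,p}$ with $A_p=\{v\in\QQ_p^2:x_{a,p}v\in\ZZ_p^2\}$; so literal subgroups biject with $\hat{X}$. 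Finally $A\cong A'$ as abstract groups iff some $\gamma\in\GL_2(\QQ)$ maps $A$ onto $A'$ (extend an isomorphism to $\QQ^2$), which localises to $x_{a',p}\in\GL_2(\ZZ_p)\,x_{a,p}\,\gamma^{-1}$, i.e.\ to $x_a,x_{a'}$ lying in one class of $\GL_2(\ZZZ)\backslash\M_2(\AA_f)\slash\GL_2(\QQ)$; hence isomorphism classes of subgroups are in bijection with the points.

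The step I expect to be the main obstacle is the non-trivial direction of the criterion in the second paragraph: extracting the double coset of $x_a$ from the bare isomorphism type of the right $\M_2^\ns(\ZZ)$-set $T_a$. The reduction ``any isomorphism $T_a\to T_b$ is left multiplication'', and then using weak approximation to pass from $T_a$ to the ideal $\mathcal N_a$ it generates in $\M_2(\AA_f)$, is where the real content lies; and the singular locus $\det x_a=0$ --- which does contribute points, e.g.\ those corresponding to subgroups such as $\ZZ\oplus\QQ$ --- will need the extra bookkeeping indicated above.
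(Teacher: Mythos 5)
Your proposal is correct, but it computes the fibres of $\pi$ by a genuinely different route than the paper. The paper evaluates a natural isomorphism $\pi(x)^*\simeq\pi(y)^*$ at the $\M_2^\ns(\ZZ)$-set $\ZZ^2$: this immediately produces the subgroups $A_x=\varinjlim_{m\le x}\ZZ^2\subseteq\QQ^2$, naturality (applied to the equivariant addition map) makes the comparison a group isomorphism, and extension of scalars to $\QQ$ yields the element of $\GL_2(\QQ)$ realising the double coset relation; the bijection with isomorphism classes of subgroups then comes essentially for free, with surjectivity obtained by writing any $A$ as a directed union of lattices $m^{-1}\ZZ^2$. You instead evaluate at the representable object $M$ itself, identify $\pi(a)^*M$ with the flat right $M$-set $T_a\subseteq\GL_2(\QQ)$, prove that every right-equivariant isomorphism $T_a\to T_b$ is a left multiplication, and then extract the double coset by an adelic density-plus-biduality argument, adding the subgroup classification at the end by a local-global argument. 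What your route buys is an explicit description of the flat right $M$-sets classifying the points (your $T_a$ are exactly sets of the shape $G_y$ appearing in the Connes--Marcolli discussion in the introduction); what the paper's choice of test object buys is that the group structure on $\ZZ^2$ replaces your closure/duality step by a one-line extension of scalars --- though both proofs ultimately rely on the same recovery fact, namely that the associated module determines the matrix up to $\GL_2(\ZZZ)$. Concerning your flagged caveat: it is not a genuine gap. The identity $\{z:z\mathcal{N}_a\subseteq\M_2(\ZZZ)\}=\M_2(\ZZZ)x_a$ holds for singular $x_a$ as well, since componentwise it is biduality for finitely generated $\ZZ_p$-submodules of $\QQ_p^2$; and $\M_2(\ZZ_p)y$ determines the row module of $y$, which determines the left $\GL_2(\ZZ_p)$-class of $y$ in every rank (in rank one, $y$ is a unimodular column times a generating row of the module, cf.\ the singular Hermite representatives in Subsection \ref{ssec:posets}), so the conclusion $x_a\delta^{-1}\in\GL_2(\ZZZ)x_b$ goes through uniformly, singular components included.
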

\begin{proof}
Consider the map $\pi:\hat{X} \twoheadrightarrow \mathsf{Pts}(\setswith{\M_2^\ns(\ZZ)})$ as above. To $x \in X$ we associate the subgroup $\ZZ^2 \subseteq A_x \subseteq \QQ^2$ given by
\begin{equation}
A_x = \pi(x)^*\ZZ^2 = \varinjlim_{m \leq x} \ZZ^2
\end{equation}
where $\ZZ^2$ is seen as column vectors equipped with the standard left action of $\M_2^\ns(\ZZ)$. The inclusion in $\QQ^2$ is the one induced by taking the standard inclusion $\ZZ^2 \subset \QQ^2$ for $m$ the identity matrix.

Now suppose that $\pi(x) = \pi(y)$, in other words there is a natural isomorphism of functors $\pi(x)^* \simeq \pi(y)^*$. Then in particular there is an isomorphism
\begin{equation}
A_x \stackrel{\sim}{\to} A_y
\end{equation}
and by naturality this isomorphism preserves addition and negation (so it is a group isomorphism). Using extension of scalars to $\QQ$, we can find some $g \in \GL_2(\QQ)$ such that $A_y = g\cdot A_x$, or rather
\begin{equation}
h^{-1} \cdot A_y = N^{-1} \cdot A_x
\end{equation}
for $h = Ng \in \M_2^\ns(\ZZ)$. This in turn implies $yh = xN$ or $yg = x$, in other words $x$ and $y$ determine the same class in 
\begin{equation}
\GL_2(\ZZZ)\backslash\M_2(\AA_f)\slash\GL_2(\QQ).
\end{equation}
Conversely, take $x,y \in \hat{X}$ with $x = yg$. Then it is easy to construct a natural isomorphism $\pi(x)^* \simeq \pi(y)^*$.

We still need to show that for any subgroup $\ZZ^2 \subseteq A \subseteq \QQ^2$ we can find an $x \in \hat{X}$ such that $A = A_x$. We use the fact that $A$ can be written as an inductive limit of finitely generated subgroups of rank 2 containing $\ZZ^2$. These finitely generated subgroups can each be written as $m^{-1}\ZZ^2$ for some $m \in \M_2^\ns(\ZZ)$. But then $A = A_x$ for $x$ the supremum of all these $m$'s in the poset $\hat{X}$.
\end{proof}

\begin{remark}
The idea of using finite adeles to describe subgroups $\ZZ \subseteq A \subseteq \QQ^2$ up to isomorphism is not new. The standard approach is the one introduced by Mal'cev \cite{Malcev} in 1938,  see \cite[Theorem 93.4]{Fuchs}. However, the description from \mbox{Theorem \ref{thm:points}} is a bit different and was not found by the author of this paper in previous works. 

Note that both the original description by Mal'cev and the variation above are rather unpractical. For two matrices $a,b \in \M_2(\ZZZ)$ it is in general very difficult to determine if they represent the same element of $\GL_2(\ZZZ)\backslash\M_2(\AA_f)\slash\GL_2(\QQ)$.
\end{remark}

\subsection{Automorphisms of \texorpdfstring{$\setswith{\M_2^\ns(\ZZ)}$}{M2ns(Z)-sets}} \label{ssec:automorphisms}

The goal of this subsection is to show that the group of automorphisms of $\setswith{\M_2^\ns(\ZZ)}$ (up to natural isomorphism) is rather small: it is isomorphic to the group of characters $\hom(\QQ^\times,\{1,-1\})$. Intuitively, it is not surprising that it is a small group, when taking into account the result of Stephenson \cite{Stephenson} that all monoid automorphisms of $\M_2(\ZZ)$ are inner. But Stephenson's proof uses the idempotents in $\M_2(\ZZ)$; if we consider the submonoid $\M_2^\ns(\ZZ)$, then there are more automorphisms, as shown in the following proposition.

\begin{proposition} \label{prop:auto}
Let $\vartheta: \M_2^\ns(\ZZ) \to \M_2^\ns(\ZZ)$ be an automorphism of monoids. Then we can find $g \in \GL_2(\ZZ)$ and a character $\chi:\QQ^\times\to\{1,-1\}$ such that
\[
\vartheta(a) = \chi(\det(a))\, gag^{-1}.
\]
In particular, $\vartheta$ preserves the determinant.
\end{proposition}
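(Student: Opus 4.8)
The plan is to reduce $\vartheta$ to its effect on a small generating set of $\M_2^\ns(\ZZ)$. Since a monoid automorphism preserves the group of units and the center, $\vartheta$ restricts to an automorphism $\alpha$ of $\GL_2(\ZZ)$ and to a monoid automorphism of the center $Z = \{\,nI_2 : n\in\ZZ\setminus\{0\}\,\}\cong(\ZZ\setminus\{0\},\times)$. The latter monoid has unit group $\{\pm1\}$ and irreducibles $\pm p$ ($p$ prime), so $\vartheta(-I_2) = -I_2$ and $\vartheta(pI_2) = \varepsilon_p\,\sigma(p)\,I_2$ for a permutation $\sigma$ of the primes and signs $\varepsilon_p\in\{\pm1\}$ (to be pinned down later). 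On the other hand, Smith normal form writes every $a\in\M_2^\ns(\ZZ)$ as $u\,\mathrm{diag}(d_1,d_2)\,v$ with $u,v\in\GL_2(\ZZ)$, and $\mathrm{diag}(d_1,d_2) = (d_1I_2)\,\mathrm{diag}(1,d_2/d_1)$, where $d_1I_2 = \mathrm{diag}(d_1,1)\,\mathrm{diag}(1,d_1)$, the matrix $\mathrm{diag}(d_1,1)$ is a $\GL_2(\ZZ)$-conjugate of $\mathrm{diag}(1,d_1)$, and each $\mathrm{diag}(1,m)$ is a product of the matrices $E_p := \bigl(\begin{smallmatrix}1&0\\0&p\end{smallmatrix}\bigr)$. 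Hence $\M_2^\ns(\ZZ)$ is generated as a monoid by $\GL_2(\ZZ)\cup\{E_p : p \text{ prime}\}$, and $\vartheta$ is determined by $\alpha$ and the values $\vartheta(E_p)$.

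Next I would reduce $\alpha$. The subgroup $\SL_2(\ZZ)$ is characteristic in $\GL_2(\ZZ)$: every element of order outside $\{1,2\}$ has determinant $1$ (finite-order elements of $\GL_2(\ZZ)$ have order $1,2,3,4$ or $6$, and in orders $3,4,6$ the pair of eigenvalues are conjugate roots of unity with product $1$), while $\SL_2(\ZZ) = \langle S,ST\rangle$ with $S = \bigl(\begin{smallmatrix}0&-1\\1&0\end{smallmatrix}\bigr)$ of order $4$ and $ST$ of order $6$, so $\SL_2(\ZZ)$ is exactly the subgroup generated by the elements of order $>2$. Thus $\alpha$ restricts to an automorphism of $\SL_2(\ZZ)$, and since every automorphism of $\SL_2(\ZZ)$ is conjugation by an element of $\GL_2(\ZZ)$ (i.e.\ $\aut(\SL_2(\ZZ)) = \PGL_2(\ZZ)$), after replacing $\vartheta$ by the monoid automorphism $a\mapsto h^{-1}\vartheta(a)h$ for a suitable $h\in\GL_2(\ZZ)$ we may assume $\vartheta|_{\SL_2(\ZZ)} = \mathrm{id}$. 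Writing $\GL_2(\ZZ) = \langle\SL_2(\ZZ),J\rangle$ with $J = \mathrm{diag}(1,-1)$, the identity $\alpha(JgJ^{-1}) = \alpha(J)\,g\,\alpha(J)^{-1} = JgJ^{-1}$ for $g\in\SL_2(\ZZ)$ shows $J^{-1}\alpha(J)$ centralizes $\SL_2(\ZZ)$ in $\GL_2(\QQ)$, hence is scalar, and integrality forces $\alpha(J) = \pm J$. So $\vartheta|_{\GL_2(\ZZ)}$ is either the identity or the twist $g\mapsto\det(g)\,g$; in either case $\vartheta$ fixes $S$, $T = \bigl(\begin{smallmatrix}1&1\\0&1\end{smallmatrix}\bigr)$ and $T' = \bigl(\begin{smallmatrix}1&0\\1&1\end{smallmatrix}\bigr)$.

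Now I would pin down $\vartheta(E_p)$ with three identities linking $E_p$ to $S,T,T'$: $E_pT^p = TE_p$, $E_pT' = (T')^pE_p$, and $E_p\bigl(SE_pS^{-1}\bigr) = pI_2$ (note $SE_pS^{-1} = \mathrm{diag}(p,1)$). Applying $\vartheta$ and using that it fixes $S,T,T'$: the first identity makes $B := \vartheta(E_p)$ satisfy $BT^p = TB$, forcing $B = \bigl(\begin{smallmatrix}a&b\\0&pa\end{smallmatrix}\bigr)$; the second then forces $b = 0$, so $B = aE_p$ with $a\in\ZZ\setminus\{0\}$; and the third gives $\vartheta(pI_2) = a^2\,pI_2$. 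Comparing with $\vartheta(pI_2) = \varepsilon_p\sigma(p)I_2$ from the first paragraph: $a^2p$ is positive and divisible by the square $a^2$, yet equals $\pm$ a prime, so $a^2 = 1$, $\varepsilon_p = 1$ and $\sigma(p) = p$. Hence $\vartheta(E_p) = \eta_pE_p$ with $\eta_p\in\{\pm1\}$, and $\vartheta$ fixes $Z$ pointwise. Let $\chi\colon\QQ^\times\to\{\pm1\}$ be the character with $\chi(-1)$ the sign by which $\alpha$ twists $J$ and $\chi(p) = \eta_p$. Then $a\mapsto\chi(\det a)\,a$ is a monoid endomorphism of $\M_2^\ns(\ZZ)$ — as $\chi\circ\det$ is multiplicative with central scalar values — and it agrees with $\vartheta$ on the generators $\GL_2(\ZZ)\cup\{E_p\}$; therefore $\vartheta(a) = \chi(\det a)\,a$ for all $a$. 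Undoing the conjugation by $h$ gives $\vartheta(a) = \chi(\det a)\,gag^{-1}$ with $g = h$, and $\det\vartheta(a) = \chi(\det a)^2\det a = \det a$.

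The routine part is all the explicit $2\times2$ matrix bookkeeping; the one genuinely external input, and the step I expect to be the real obstacle to write carefully, is the group theory of $\GL_2(\ZZ)$ used in the middle paragraph — that $\SL_2(\ZZ)$ is characteristic and that $\aut(\SL_2(\ZZ))$ consists only of inner automorphisms together with conjugation by $\mathrm{diag}(1,-1)$. (Alternatively one can lean on the poset $\GL_2(\ZZ)\backslash\M_2^\ns(\ZZ)$ of Subsection~\ref{ssec:posets}: $\vartheta$ induces a poset automorphism whose atoms are precisely the classes of primitive matrices of prime determinant, so $|\det\vartheta(E_p)| = p$ directly; but the structure of $\aut(\GL_2(\ZZ))$ is still needed for $\vartheta$ on the units.)
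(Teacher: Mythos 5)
Your reduction of $\alpha=\vartheta|_{\GL_2(\ZZ)}$ rests on the claim that every automorphism of $\SL_2(\ZZ)$ is conjugation by an element of $\GL_2(\ZZ)$, i.e.\ $\aut(\SL_2(\ZZ))=\PGL_2(\ZZ)$. That claim is false, and this is a genuine gap. Since $\SL_2(\ZZ)^{\mathrm{ab}}\cong\ZZ/12\ZZ$, there is a unique surjective character $\epsilon:\SL_2(\ZZ)\to\{\pm 1\}$, and because $\{\pm I_2\}$ is central and $\epsilon(-I_2)=1$, the map $x\mapsto \epsilon(x)\,x$ (explicitly $S\mapsto -S$, $T\mapsto -T$, $ST\mapsto ST$) is an automorphism of $\SL_2(\ZZ)$. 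It induces the identity on $\PSL_2(\ZZ)$, so it cannot be conjugation by any $g\in\GL_2(\ZZ)$ (such a $g$ would centralize $\PSL_2(\ZZ)$ inside $\PGL_2(\QQ)$ and hence be trivial); in fact $\out(\SL_2(\ZZ))$ has order $4$, not $2$, and these central twists even extend to $\GL_2(\ZZ)$ since $\GL_2(\ZZ)^{\mathrm{ab}}\cong\ZZ/2\times\ZZ/2$ has characters not factoring through $\det$. Consequently "after conjugating we may assume $\vartheta|_{\SL_2(\ZZ)}=\mathrm{id}$" is unjustified, and your later computations genuinely depend on it: in the twisted case one has $\vartheta(T)=-T$, $\vartheta(T')=-T'$, $\vartheta(S)=-S$, and for odd $p$ your three identities are satisfied with all signs cancelling, so no contradiction appears from the relations you use. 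The case can be killed, but you must do it explicitly — for instance the relation $E_2T^2=TE_2$ gives, in the twisted case, $B T^2=-TB$ for $B=\vartheta(E_2)$, which forces the first column of $B$ to vanish and hence $\det B=0$, a contradiction. With that extra case handled, the rest of your argument (characteristicity of $\SL_2(\ZZ)$ via elements of order $>2$, the generation of $\M_2^\ns(\ZZ)$ by $\GL_2(\ZZ)$ and the $E_p$, and the bookkeeping pinning $\vartheta(E_p)=\pm E_p$) is sound.

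It is worth noting how the paper avoids this issue: it extends $\vartheta$ to the groupification $\GL_2(\QQ)$ of $\M_2^\ns(\ZZ)$ and invokes Hua's theorem, so the only non-inner ingredients are central character twists of $\GL_2(\QQ)$; these automatically factor through $\det$ because $\SL_2(\QQ)$ is perfect, whence they restrict trivially to $\SL_2(\ZZ)$. The obstruction you ran into is exactly the failure of this at the integral level ($\SL_2(\ZZ)$ is far from perfect), which is why the paper then only has to argue separately that the determinant is preserved (via the poset of $\GL_2(\ZZ)\backslash\M_2^\ns(\ZZ)$ and Smith normal form) and that the conjugating element lies in $\GL_2(\ZZ)$ — a step your integral approach gets for free. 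So your route is salvageable and more elementary, but as written the key group-theoretic input is wrong and one case is missing.
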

\begin{proof}
The groupification of $\M_2^\ns(\ZZ)$ is $\GL_2(\QQ)$, so $\vartheta$ is the restriction of a group automorphism $\hat{\vartheta} : \GL_2(\QQ) \to \GL_2(\QQ)$. This $\hat{\vartheta}$ can be written as
\begin{equation}
\hat{\vartheta}(a) = \hat{\chi}(a)\, gag^{-1}
\end{equation}
with $\chi:\GL_2(\QQ) \to \QQ^\times$ a character and $g \in \GL_2(\QQ)$ (use \cite[Theorem 1]{Hua} and keep in mind that $a \mapsto \det(a) \, (a^t)^{-1}$ is conjugation by
\[
\begin{pmatrix}
0 & 1 \\
-1 & 0
\end{pmatrix}
\]
so this is an inner automorphism).  Note that $\hat{\chi}$ is necessarily trivial on the commutator subgroup $\SL_2(\QQ)$ of $\GL_2(\QQ)$, so we can write $\hat{\chi}(a) = \chi(\det(a))$ for some character $\chi : \QQ^\times \to \QQ^\times$.

Further, $\vartheta$ induces a poset automorphism
\begin{equation}
\bar{\vartheta} ~:~ \GL_2(\ZZ)\backslash\M_2^\ns(\ZZ) \longrightarrow \GL_2(\ZZ)\backslash\M_2^\ns(\ZZ)
\end{equation}
In particular, it preserves the elements that have exactly one strictly smaller element. These correspond to the elements $a \in \M_2^\ns(\ZZ)$ with $\det(a) = \pm p$ for some prime number $p$. So if $\det(a) = \pm p$ then up to sign $\det(\vartheta(a)) = q$ with $q$ prime. Moreover
\begin{equation}
\det(\vartheta(a)) = \chi(\det(a))^2\, \det(a)
\end{equation}
so in fact $\det(\vartheta(a)) = \pm p$ (with the same sign) and $\chi(\det(a))^2 = 1$. Using the Smith normal form, we see that $\M_2^\ns(\ZZ)$ is generated by units and matrices of prime determinant. As a corollary $\det(\vartheta(a)) = \det(a)$ and $\chi(\det(a))^2 = 1$ for any $a \in \M_2^\ns(\ZZ)$. We still need to deduce that $g \in \GL_2(\ZZ)$ from $\hat{\vartheta}$ sending $\M_2^\ns(\ZZ)$ to itself. Note that $gag^{-1} \in \M_2^\ns(\ZZ)$ for all $a \in \M_2^\ns(\ZZ)$. Further, for any $b \in \M_2(\ZZ)$ we can find $\lambda \in \ZZ$ big enough such that $b + \lambda I_2$ is in $\M_2^\ns(\ZZ)$. This implies that $gbg^{-1} \in \M_2(\ZZ)$ and in this way conjugation by $g$ defines a ring automorphism of $\M_2(\ZZ)$. This shows $g \in \GL_2(\ZZ)$.
\end{proof}

In a cancellative monoid $M$ being left invertible is equivalent to being right invertible. The group of (left and right) invertible elements will be denoted by $M^\times$. As in the case of groups there is a (group) morphism
\begin{gather*}
M^\times \longrightarrow \aut(M) \\
g ~\mapsto~ \iota_g
\end{gather*}
with $\iota_g(m) = gmg^{-1}$. The image is the normal subgroup of inner automorphisms, denoted by $\inn(M) \subseteq \aut(M)$. We write
\[
\out(N) = \frac{\aut(M)}{\inn(M)}
\]
for the group of outer automorphisms. With these notations we can formulate an immediate corollary to Proposition \ref{prop:auto}.

\begin{corollary}
There is a group isomorphism
\[
\out(\M_2^\ns(\ZZ)) ~\cong~ \Hom(\QQ^\times,\{1,-1\}).
\]
\end{corollary}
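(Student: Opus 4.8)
The plan is to deduce the corollary directly from Proposition \ref{prop:auto} by computing the two groups in the quotient $\aut(\M_2^\ns(\ZZ))/\inn(\M_2^\ns(\ZZ))$. First I would identify the units: $\M_2^\ns(\ZZ)^\times = \GL_2(\ZZ)$, since a matrix in $\M_2^\ns(\ZZ)$ has an inverse in the same monoid if and only if $\det = \pm 1$. Hence $\inn(\M_2^\ns(\ZZ))$ is the image of the map $\GL_2(\ZZ) \to \aut(\M_2^\ns(\ZZ))$ sending $g$ to $a \mapsto gag^{-1}$; its kernel is the centre $\z(\GL_2(\ZZ)) = \{\pm I_2\}$, so $\inn(\M_2^\ns(\ZZ)) \cong \PGL_2(\ZZ)$, but I only need it as a subgroup of $\aut$.

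Next I would describe $\aut(\M_2^\ns(\ZZ))$ using Proposition \ref{prop:auto}: every automorphism $\vartheta$ has the form $\vartheta(a) = \chi(\det a)\, gag^{-1}$ for some $g \in \GL_2(\ZZ)$ and some character $\chi : \QQ^\times \to \{1,-1\}$. This gives a surjective group homomorphism $\Phi : \GL_2(\ZZ) \times \Hom(\QQ^\times,\{1,-1\}) \to \aut(\M_2^\ns(\ZZ))$ sending $(g,\chi)$ to the automorphism above; one should check it is a homomorphism, which is immediate since the scalar factor $\chi(\det a)$ is central and multiplicative in $a$. Composing $\Phi$ with the quotient map $\aut \to \out$ kills the $\GL_2(\ZZ)$ factor (those are precisely the inner automorphisms), so it descends to a surjection $\Hom(\QQ^\times,\{1,-1\}) \twoheadrightarrow \out(\M_2^\ns(\ZZ))$.

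It remains to show this surjection is injective, i.e.\ that for a nontrivial character $\chi$ the automorphism $a \mapsto \chi(\det a)\, a$ is not inner. The point is that an inner automorphism preserves the determinant of every element, while so does $a \mapsto \chi(\det a)a$ (as noted in the proposition, $\det(\chi(\det a)a) = \chi(\det a)^2 \det a = \det a$), so a determinant argument alone is not enough. Instead I would argue: if $\chi(\det a)\, gag^{-1} = hah^{-1}$ for all $a$ and fixed $g,h \in \GL_2(\ZZ)$, then setting $c = h^{-1}g$ we get $\chi(\det a)\, ca c^{-1} = a$ for all $a \in \M_2^\ns(\ZZ)$. Taking $a$ with $\det a = 1$ forces $ca = ac$ for all such $a$; since $\SL_2(\ZZ)$ together with scalars generates everything, $c$ is central, so $c = \pm I_2$ and $cac^{-1} = a$ identically; but then $\chi(\det a)\, a = a$ for all $a$, forcing $\chi \equiv 1$. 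Hence the kernel is trivial and $\out(\M_2^\ns(\ZZ)) \cong \Hom(\QQ^\times,\{1,-1\})$.

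The main obstacle is the last injectivity step: one must be careful that the ``twist by $\chi\circ\det$'' automorphisms genuinely give new outer classes and are not secretly conjugation by something, which is why the central-element computation with $c = h^{-1}g$ (rather than a naive determinant comparison) is essential. Everything else — identifying the units, checking $\Phi$ is a homomorphism, and that $\GL_2(\ZZ)$ maps onto the inner automorphisms — is routine.
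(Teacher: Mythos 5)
Your strategy is the one the paper intends: the corollary is stated there as immediate from Proposition \ref{prop:auto}, and the only content to supply is exactly your injectivity step, namely that a twist $a \mapsto \chi(\det a)\,gag^{-1}$ can be inner only when $\chi$ is trivial. Your setup (units $=\GL_2(\ZZ)$, the homomorphism $\Phi$, surjectivity from Proposition \ref{prop:auto}, reduction to $\chi(\det a)\,cac^{-1}=a$ with $c=h^{-1}g$) is all correct.

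One justification in the injectivity step is wrong as stated, though the step itself is salvageable on the spot: $\SL_2(\ZZ)$ together with the scalar matrices does \emph{not} generate $\M_2^\ns(\ZZ)$ (any such product has determinant $\pm$ a perfect square, so e.g.\ a matrix of determinant $2$ is never reached), so you cannot conclude centrality of $c$ by a generation argument. What you actually need, and what is true, is that the centralizer of $\SL_2(\ZZ)$ in $\M_2(\QQ)$ consists of the scalar matrices: commuting with $\begin{pmatrix}1&1\\0&1\end{pmatrix}$ and $\begin{pmatrix}1&0\\1&1\end{pmatrix}$ already forces $c=\lambda I_2$, hence $c=\pm I_2$ since $c\in\GL_2(\ZZ)$. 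With that substitution the rest goes through: $\chi(\det a)\,a=a$ for all $a\in\M_2^\ns(\ZZ)$ makes $\chi$ trivial on all nonzero integers, and since the nonzero integers generate $\QQ^\times$, the character $\chi$ is trivial, giving the claimed isomorphism $\out(\M_2^\ns(\ZZ))\cong\Hom(\QQ^\times,\{1,-1\})$.
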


Note that every $f \in \Hom(\QQ^\times,\{1,-1\})$ is uniquely determined by choosing a value for $f(-1)$ and for $f(p)$ for all primes $p$.

We now want to determine the topos automorphisms of $\setswith{\M_2^\ns(\ZZ)}$. We first present a criterion describing which topos automorphisms of $\setswith{M}$ (with $M$ an arbitrary monoid) arise from monoid automorphisms of $M$.

\begin{proposition}
Let $M$ be an arbitrary monoid and let $\Theta : \setswith{M} \to \setswith{M}$ be an equivalence such that $G\Theta \simeq G$ for $G : \setswith{M} \to \sets$ the forgetful functor. Then $\Theta \simeq \vartheta^*$ for some monoid automorphism $\vartheta : M \to M$.
\end{proposition}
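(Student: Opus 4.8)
The plan is to single out, inside $\setswith{M}$, the left-regular $M$-set (the object representing the forgetful functor $G$), to show that any equivalence $\Theta$ with $G\Theta\simeq G$ must fix this object up to isomorphism, to extract a monoid automorphism $\vartheta\colon M\to M$ from the induced automorphism of its endomorphism monoid, and finally to conclude $\Theta\simeq\vartheta^*$ from the universal property of the presheaf topos $\setswith{M}$.

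First I would recall that the forgetful functor is representable: writing $M$ also for the monoid $M$ regarded as a left $M$-set by left multiplication, an equivariant map $M\to X$ is the same thing as an element of $X$, so $G\cong\Hom_{\setswith M}(M,-)$; moreover $\en_{\setswith M}(M)\cong M^{\op}$, the equivariant endomorphisms of $M$ being exactly the right multiplications $r_a\colon x\mapsto xa$, with $r_ar_b=r_{ba}$. Let $\Psi$ be a pseudo-inverse of $\Theta$. Since $\Theta$, hence $\Psi$, is fully faithful,
\[
\Hom_{\setswith M}(M,-)\ \simeq\ G\ \simeq\ G\Theta\ =\ \Hom_{\setswith M}(M,\Theta(-))\ \simeq\ \Hom_{\setswith M}(\Psi M,-)
\]
naturally in the argument, so by the Yoneda lemma $\Psi M\cong M$ and therefore $\Theta M\cong M$. (Equivalently one can use that $G$ has the left adjoint $S\mapsto M\times S$, whose value at the one-point set is $M$: from $G\Theta\simeq G$ the left adjoints agree, whence $\Theta M\simeq M$.)

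Now I would fix an isomorphism $\phi\colon\Theta M\xrightarrow{\sim}M$. As $\Theta$ is fully faithful, $a\mapsto\phi\circ\Theta(r_a)\circ\phi^{-1}$ is an automorphism of the monoid $\en_{\setswith M}(M)\cong M^{\op}$, hence corresponds to a monoid automorphism $\vartheta\colon M\to M$, which I normalize so that $\phi\,\Theta(r_a)\,\phi^{-1}=r_{\vartheta(a)}$. Let $\vartheta^*\colon\setswith M\to\setswith M$ be restriction of scalars along $\vartheta$; since $\vartheta$ is invertible, $\vartheta^*$ is an equivalence, it sends the left-regular $M$-set to an $M$-set isomorphic to it (via $\vartheta$ itself), and transporting $r_a$ through that isomorphism shows the monoid automorphism of $\en(M)$ induced by $\vartheta^*$ is again $\vartheta$ (after possibly replacing $\vartheta$ by $\vartheta^{-1}$ to match the chosen convention for $\vartheta^*$). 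Finally, both $\Theta$ and $\vartheta^*$ are cocontinuous, and every $M$-set is canonically a colimit of copies of $M$ indexed by its elements, with transition maps the right translations; since a cocontinuous functor $\setswith M\to\mathcal{E}$ is determined up to natural isomorphism by the object it assigns to $M$ together with the induced monoid homomorphism $M^{\op}\to\en_{\mathcal E}(-)$, $a\mapsto(\text{image of }r_a)$, and since $\Theta$ and $\vartheta^*$ now carry isomorphic data of this kind, we get $\Theta\simeq\vartheta^*$.

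The conceptual heart --- that $G$ is representable and that an equivalence preserving $G$ must preserve the representing object --- is entirely standard; I expect the main obstacle to be purely organizational, namely keeping the variances straight ($M$ versus $M^{\op}$, left versus right translations, and the direction of restriction in $\vartheta^*$) so that the automorphism extracted from $\Theta$ really is the one realized by $\vartheta^*$, and invoking the universal property of the presheaf topos $\setswith M$ in the precise form that cocontinuous functors out of it correspond to an object equipped with an $M^{\op}$-action.
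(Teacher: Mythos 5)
Your proof is correct, and it reaches the result by a slightly different route than the paper. Both arguments share the same engine: conjugate the translation data by the given isomorphism witnessing $G\Theta \simeq G$ to extract a monoid automorphism $\vartheta$. But you package this via representability --- $G \cong \Hom_{\setswith{M}}(M,-)$, so $\Theta$ fixes the left-regular object up to isomorphism, and the automorphism lives in $\en_{\setswith{M}}(M)\cong M^{\op}$ --- and then you conclude $\Theta\simeq\vartheta^*$ by appealing to cocontinuity and the universal property of the presheaf topos (a cocontinuous functor is determined up to isomorphism by its restriction to the representable, i.e.\ by an object with an $M^{\op}$-action). The paper instead works with $\nat(G,G)\cong M$ (the left translations $\alpha_m$) and conjugates by the chosen isomorphism $\varphi : G\Theta \Rightarrow G$ itself; the commuting square $\varphi\circ\alpha_m\Theta = \alpha_{\vartheta(m)}\circ\varphi$ then says directly that each component $\varphi_N : \Theta N \to N$ is equivariant for the $\vartheta$-twisted action, so $\varphi$ \emph{is} the natural isomorphism $\Theta\Rightarrow\vartheta^*$, with no Kan-extension or colimit-decomposition argument needed. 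So the paper's proof is more self-contained and gives the natural isomorphism in one stroke, while yours trades that for a standard (and heavier) universal-property lemma; the variance issue you flag is genuine --- with the usual convention that $\vartheta^*$ is restriction along $\vartheta$, the automorphism of $\en(M)$ realized by $\vartheta^*$ through the canonical isomorphism $M\cong\vartheta^*M$ is $r_a\mapsto r_{\vartheta^{-1}(a)}$, so your extracted automorphism must indeed be inverted, which is harmless for the statement --- and your handling of it is fine.
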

\begin{proof} For $\theta$ an automorphism of $M$, the equivalence $\theta^*$ satisfies $G\theta^* = G$ and can be reconstructed from the monoid map
\begin{gather*}
\nat(G,G) ~\to~ \nat(G,G) \\
\alpha_m ~\mapsto~ \alpha_m \vartheta^* = \alpha_{\vartheta(m)}.
\end{gather*}
Here $\alpha_m$, $m \in M$ is the natural transformation given by $\alpha_m(n) = m \cdot n$ for all $M$-sets $N$ and $n \in N$ (every natural transformation $G \Rightarrow G$ is of this form). We denote horizontal composition for natural transformations by juxtaposition and we denote vertical composition with $\circ$. Note that any monoid map $\nat(G,G) \to \nat(G,G)$ is of the form $\alpha_m \mapsto \alpha_{\vartheta(m)}$ for some $\vartheta$.

Now take some $\Theta$ as in the proposition an take an equivalence $\varphi : G\Theta \Rightarrow G$. Then we can construct a monoid map
\begin{align*}
\nat(G,G) ~\to~ \nat(G,G) \\
\alpha_m \mapsto \varphi \circ \alpha_m\Theta \circ \varphi^{-1}.
\end{align*}
We can find a monoid automorphism $\vartheta : M \to M$ such that
\begin{equation}
\alpha_{\theta(m)} = \varphi \circ \alpha_m\Theta \circ \varphi{-1}
\end{equation}
in other words, such that the diagram
\begin{equation}
\begin{tikzcd}
\Theta N \ar[r,"{\varphi}"] \ar[d,"{\alpha_m}"'] & N \ar[d,"{\alpha_{\vartheta(m)}}"] \\
\Theta N' \ar[r,"{\varphi}"'] & N'
\end{tikzcd}
\end{equation}
commutes for any $M$-sets $N$ and $N'$ and any $m \in M$. But this means that $\varphi$ is equivariant, so it defines a natural isomorphism $\varphi : \Theta \Rightarrow \vartheta^*$.
\end{proof}

\begin{corollary} \label{cor:monoid-topos-auto}
Let $M$ be a cancellative monoid. Then the topos automorphisms $\Theta$ of $\setswith{M}$ up to natural isomorphism satisfying $G\Theta \simeq G$ form a group isomorphic to $\out(M)$.
\end{corollary}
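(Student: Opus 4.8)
The plan is to upgrade the correspondence $\vartheta \mapsto \vartheta^*$ (restriction of scalars along $\vartheta$, so that $\vartheta^*N$ is $N$ with action $m\cdot n := \vartheta(m)\cdot n$) to a group isomorphism. Write $H$ for the set of topos automorphisms $\Theta$ of $\setswith{M}$ with $G\Theta \simeq G$, taken up to natural isomorphism; I would first record that $H$ is a subgroup of the group of all topos automorphisms of $\setswith{M}$ modulo natural isomorphism (for inverses, whisker a natural isomorphism $G\Theta \simeq G$ on the right by a quasi-inverse $\Theta^{-1}$ to obtain $G\Theta^{-1}\simeq G$; closure under composition is immediate). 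For a monoid automorphism $\vartheta$ the functor $\vartheta^*$ satisfies $G\vartheta^* = G$ on the nose, so $[\vartheta^*]\in H$; moreover $(\vartheta_1\vartheta_2)^* = \vartheta_2^*\vartheta_1^*$, so $\vartheta\mapsto[\vartheta^*]$ is a group anti-homomorphism $\aut(M)\to H$ (which one turns into an honest homomorphism by precomposing with $\vartheta\mapsto\vartheta^{-1}$, and in any case a bijective anti-homomorphism of groups induces an isomorphism). By the previous proposition this map is surjective, so everything reduces to identifying its kernel with $\inn(M)$.

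So the key step, and the one I expect to need the most care, is the claim that $\vartheta^*\simeq\mathrm{id}_{\setswith{M}}$ if and only if $\vartheta\in\inn(M)$. The easy direction is routine: if $\vartheta = \iota_g$ with $g\in M^\times$, then $\phi_N : N\to\vartheta^*N$, $n\mapsto g\cdot n$, is a bijection with inverse $n\mapsto g^{-1}\cdot n$; it is equivariant for the twisted action on $\vartheta^*N$ since $g\cdot(m\cdot n) = (gmg^{-1})\cdot(g\cdot n)$; and it is natural in $N$ because every equivariant map commutes with the action of $g\in M$. Hence $\phi : \mathrm{id}\Rightarrow\vartheta^*$ is a natural isomorphism.

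For the converse I would take a natural isomorphism $\psi:\mathrm{id}\Rightarrow\vartheta^*$ and evaluate it at the regular representation, i.e.\ $M$ viewed as a left $M$-set by multiplication. Equivariance of $\psi_M : M\to\vartheta^*M$ gives $\psi_M(m) = \vartheta(m)\,g$ where $g := \psi_M(1)$. Then, for each $a\in M$, the right translation $\rho_a : M\to M$, $x\mapsto xa$, is a morphism of left $M$-sets which $\vartheta^*$ leaves untouched on underlying sets and maps, so naturality of $\psi$ at $\rho_a$ gives $\psi_M(xa) = \psi_M(x)\,a$, and taking $x = 1$ yields $\vartheta(a)\,g = g\,a$ for all $a\in M$. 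It then remains to see that $g$ is invertible in $M$: surjectivity of $\psi_M$ provides $b\in M$ with $\vartheta(b)\,g = 1$, and right cancellation applied to $g\,\vartheta(b)\,g = g = 1\cdot g$ gives $g\,\vartheta(b) = 1$ --- this is the unique point where cancellativity of $M$ is used. Hence $g\in M^\times$ and $\vartheta(a) = g a g^{-1}$, i.e.\ $\vartheta = \iota_g\in\inn(M)$.

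Combining the three steps, $\vartheta\mapsto[\vartheta^*]$ is a surjective (anti-)homomorphism $\aut(M)\to H$ with kernel exactly $\inn(M)$, hence induces a group isomorphism $\out(M)\cong H$, which is the assertion.
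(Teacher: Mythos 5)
Your proposal is correct and follows essentially the same route as the paper: surjectivity comes from the preceding proposition, and the kernel computation (naturality at the regular representation and at right translations, with cancellativity giving $g\in M^\times$) is exactly the paper's argument for when two pullback functors $\vartheta^*$, $\zeta^*$ are naturally isomorphic. You merely add some bookkeeping the paper leaves implicit, namely the contravariance $(\vartheta_1\vartheta_2)^*=\vartheta_2^*\vartheta_1^*$ and the explicit verification that $g$ is two-sided invertible.
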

\begin{proof}
We already showed that every autoequivalence $\Theta$ with $G\Theta \simeq G$ is induced by an automorphism of $M$ (up to natural isomorphism). We still need to determine when $\vartheta^* \simeq \zeta^*$ for $\vartheta$, $\zeta$ two automorphisms of $M$. If $\zeta = \iota_g \circ \vartheta$ for some $g \in M^\times$, then left multiplication by $g$ defines a natural isomorphism $\vartheta \Rightarrow \zeta$. Conversely, if $\varphi:\vartheta \Rightarrow \zeta$ is a natural isomorphism, then by functoriality $\varphi(ma) = \varphi(m)a$ for all $m,a \in M$. So $\varphi(m) = gm$ for some $g \in M^\times$. Because $\varphi$ is equivariant, we see that $\zeta = \iota_g \circ \vartheta$. In conclusion, $\vartheta$ and $\zeta$ are naturally isomorphic if and only if they differ by an inner automorphism.
\end{proof}

We now return to the case $M = \M_2^\ns(\ZZ)$. We first show that the criterion from Corollary \ref{cor:monoid-topos-auto} is satisfied for all topos automorphisms.

\begin{lemma} \label{lmm:auto-preserves-forget}
Let $$\Theta : \setswith{\M_2^\ns(\ZZ)} \longrightarrow \setswith{\M_2^\ns(\ZZ)}$$ be an autoequivalence. Then $G\Theta \simeq G$.
\end{lemma}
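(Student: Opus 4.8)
### Proof proposal

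The plan is to characterize the forgetful functor $G : \setswith{\M_2^\ns(\ZZ)} \to \sets$ intrinsically, i.e.\ in a way that is manifestly preserved by any autoequivalence $\Theta$. The forgetful functor is representable: $G \simeq \Hom(M,-)$ where $M = \M_2^\ns(\ZZ)$ with its left regular action, viewed as an object of $\setswith{M}$. So it suffices to show that the isomorphism class of the object $M$ is preserved by $\Theta$; then $G\Theta \simeq \Hom(\Theta M, -) \simeq \Hom(M,-) \simeq G$ and we are done. Equivalently, I must pin down $M$ among all $\M_2^\ns(\ZZ)$-sets by properties stated purely in the language of the topos (limits, colimits, subobjects, the subobject classifier, epi/mono, projectivity, etc.), since these are exactly what an equivalence of categories transports.

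First I would record the easy structural facts about $M$ as an object of $\setswith{M}$: it is a \emph{projective} object (the representable, or: the regular action is free, so $M$ is a coproduct --- here a one-element coproduct --- of copies of the "generic point" object), and more importantly it is an \emph{indecomposable} projective, i.e.\ projective and not expressible as a coproduct of two nonempty subobjects (this uses that $M$ has no nontrivial idempotents splitting off, equivalently that $\M_2^\ns(\ZZ)$ has no nontrivial decomposition $M = M_1 \sqcup M_2$ as a left $M$-set, which holds because $1$ cannot be separated). So $M$ is an indecomposable projective generator. The indecomposable projectives in $\setswith{M}$ for a monoid $M$ are exactly the objects of the form $Me$ for $e$ an idempotent of $M$ (up to isomorphism), and since $\M_2^\ns(\ZZ)$ is cancellative its only idempotent is $1$; hence $M$ is, up to isomorphism, the \emph{unique} indecomposable projective generator (equivalently: the unique indecomposable projective $P$ with $P \to 1$ epic, or with a global point). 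An autoequivalence sends indecomposable projective generators to indecomposable projective generators, so $\Theta M \simeq M$.

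The cleanest formulation to use here is: $M$ is characterized up to isomorphism as an object $P$ of $\setswith{M}$ such that (a) $P$ is projective (every epi onto $P$ splits), (b) $P$ is connected/indecomposable (not a coproduct of two nonempty objects, equivalently $\Hom(P,-)$ preserves coproducts), and (c) $P$ is a generator (the functors $\Hom(P,-)$ are jointly faithful, or: the unique map $P \to 1$ is epi). Each of (a)--(c) is invariant under any equivalence of categories. I would verify (a) from representability of $G$; (b) from cancellativity of $\M_2^\ns(\ZZ)$ (a decomposition of $M$ as a left $M$-set would partition $\M_2^\ns(\ZZ)$ into two left-ideal-closed pieces, impossible since $m \cdot 1 = m$ forces everything into the piece containing $1$); and (c) because $M$ represents $G$, which is obviously faithful. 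Then classify indecomposable projectives as $M$-quotients via idempotents (a short standard argument: a projective $P$ is a retract of a free object $\coprod_i M$, hence a retract of some $M$ when indecomposable, hence $\cong Me$ for an idempotent $e \in M$), and conclude $P \cong M1 = M$ since $1$ is the only idempotent. Therefore $\Theta M \cong M$ and $G\Theta \simeq \Hom(\Theta M,-) \simeq \Hom(M,-) \simeq G$.

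The main obstacle is making the classification of indecomposable projectives airtight: one must check that a retract of a (possibly infinite) coproduct $\coprod_{i \in I} M$ that is indecomposable is already a retract of a single summand $M$. This is where one uses that in $\setswith{M}$ coproducts are disjoint and a subobject of $\coprod_i M$ decomposes along $I$; an indecomposable retract therefore meets only one index, reducing to a retract of one copy of $M$, and a retract of $M$ in $\setswith{M}$ is $eM$... — being careful with left/right conventions — for an idempotent $e$, giving $e = 1$ by cancellativity. Everything else is routine; the only real content is this reduction plus the triviality of idempotents in $\M_2^\ns(\ZZ)$.
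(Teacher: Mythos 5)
Your proof is correct, but it takes a genuinely different route from the paper's. The paper identifies $G$ with $p^*$ for the point $p$ corresponding to the identity matrix, observes that $G\Theta$ is again the inverse image functor of some point $q$, writes $q^*N \cong \varinjlim_{m\le x} N$ for some $x \in \GL_2(\ZZZ)\backslash\M_2(\ZZZ)$ using the description of points from Section 2, and then evaluates on a two-element test object (with action through the determinant) to force $x$ to lie in $X = \GL_2(\ZZ)\backslash\M_2^\ns(\ZZ)$, i.e.\ in the $\GL_2(\QQ)$-orbit of the identity, whence $q^* \simeq p^* \simeq G$. You avoid the point machinery altogether: you characterize the representing object $M$ (the left regular representation) intrinsically as the unique nonempty indecomposable projective, using that subobjects of coproducts in $\setswith{M}$ decompose along the coproduct and that cancellativity of $\M_2^\ns(\ZZ)$ leaves $1$ as the only idempotent; these properties are manifestly invariant under an equivalence, so $\Theta M \cong M$ and then $G\Theta N = \Hom(M,\Theta N)\cong\Hom(\Theta M,\Theta N)\cong\Hom(M,N)=GN$ naturally in $N$. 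Your argument is more elementary and more general --- it works verbatim for any monoid with no nontrivial idempotents, so it also covers the corresponding claim for $\bar{\P}^\ns(\ZZ)$, which the paper dismisses as ``completely analogous'' --- while the paper's proof is shorter once the classification of points is available and fits the theme of the paper. Three small slips, none fatal: $G\Theta$ is represented by $\Theta^{-1}M$ rather than $\Theta M$ (harmless, since the characterization applies equally to $\Theta^{-1}$, or argue via full faithfulness as above); the parenthetical ``or with a global point'' is false, since $M$ has no fixed points for the left regular action, so stick with nonemptiness or the generator condition; and with your left-action convention a retract of $M$ is $Me$ (image of right multiplication by an idempotent $e$), not $eM$.
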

\begin{proof}
Note that $G \simeq p^*$ with $p$ the point corresponding to the identity matrix. Similarly, $G\Theta$ has a right adjoint and preserves finite limits so $G\Theta \simeq q^*$ for some point $q$. For $N$ an $\M_2^\ns(\ZZ)$-set, we can write
\begin{equation}
G\Theta N \cong q^*N = \varinjlim_{m \leq x} N
\end{equation}
for some $x \in \GL_2(\ZZZ)\backslash\M_2(\ZZZ)$. We claim that $q^* \simeq p^* \simeq G$. To show this, we take $N = \{0,1\}$ with the $\M_2^\ns(\ZZ)$-action
\begin{equation}
m \cdot x = \begin{cases}
x &\text{if }\det(m) = 1 \\
0 &\text{if }\det(m) \neq 1
\end{cases}.
\end{equation}
We compute
\begin{equation}
\varinjlim_{m \leq x} N = \begin{cases}
N &\text{if }x\in X \\
1 &\text{if }x\in \hat{X}\setminus X
\end{cases}
\end{equation}
(recall the notations $X = \GL_2(\ZZ)\backslash\M_2^\ns(\ZZ)$ and $\hat{X} = \GL_2(\ZZZ)\backslash\M_2(\ZZZ)$). Because $\Theta$ is an equivalence, it cannot send $N$ to $1$ (if so, $\Theta^{-1}$ would not preserve the terminal object). So $x \in X$ and because $X$ is precisely the orbit of the indentity matrix under the $\GL_2(\QQ)$-action, we see that $G\Theta \simeq q^* \simeq p^* \simeq G$.
\end{proof}

We now combine Corollary \ref{cor:monoid-topos-auto} with Lemma \ref{lmm:auto-preserves-forget}. In order to determine the topos automorphisms of $\setswith{\M_2^\ns(\ZZ)}$ we just need to compute $\out(\M_2^\ns(\ZZ))$. By Proposition \ref{prop:auto}, each automorphism $\vartheta$ can be written as $\vartheta(a) = \chi(a)\, gag^{-1}$ and $\chi_1(a)\,gag^{-1} = \chi_2(a)\,hah^{-1}$ implies that $g = h$ and $\chi_1 = \chi_2$. So we can identify the outer automorphisms with the characters $\M_2^\ns(\ZZ) \to \{1,-1\}$. This leads us to the following description.

\begin{theorem}
The group of topos automorphisms of $\setswith{\M_2^\ns(\ZZ)}$ up to natural isomorphism can be identified with the group of characters
\[
\chi : \QQ^\times \to \{1,-1\}
\]
under pointwise multiplication. 
\end{theorem}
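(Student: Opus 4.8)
The plan is simply to assemble the results already in place. By Lemma~\ref{lmm:auto-preserves-forget}, every autoequivalence $\Theta$ of $\setswith{\M_2^\ns(\ZZ)}$ satisfies $G\Theta \simeq G$, so the hypothesis of Corollary~\ref{cor:monoid-topos-auto} is met by \emph{every} topos automorphism. Since $\M_2^\ns(\ZZ)$ is cancellative, that corollary identifies the group of topos automorphisms up to natural isomorphism with $\out(\M_2^\ns(\ZZ))$. It therefore remains to compute this outer automorphism group explicitly and to check that its group law corresponds to pointwise multiplication of characters.

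For the computation I would use Proposition~\ref{prop:auto}: each monoid automorphism $\vartheta$ of $\M_2^\ns(\ZZ)$ is of the form $\vartheta(a) = \chi(\det a)\, gag^{-1}$ for a character $\chi : \QQ^\times \to \{1,-1\}$ and some $g \in \GL_2(\ZZ)$. Sending $\vartheta$ to $\chi$ is the candidate isomorphism. It descends to a well-defined injective map on $\out(\M_2^\ns(\ZZ))$: if $\chi_1(\det a)\, gag^{-1} = \chi_2(\det a)\, hah^{-1}$ for all $a$, take traces to get $\chi_1(\det a)\tr(a) = \chi_2(\det a)\tr(a)$, and choosing $a$ with $\tr(a)\neq 0$ and determinant running over all nonzero rationals gives $\chi_1 = \chi_2$; then $gag^{-1} = hah^{-1}$ for all $a$ forces $h^{-1}g$ to be central in $\GL_2(\QQ)$, hence (being in $\GL_2(\ZZ)$) equal to $\pm I_2$, so $g$ and $h$ define the same inner automorphism. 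Surjectivity is clear, since any such $\chi$ already gives a monoid automorphism $a\mapsto\chi(\det a)a$, and the automorphisms with trivial $\chi$ are precisely the inner ones. This recovers the isomorphism $\out(\M_2^\ns(\ZZ)) \cong \Hom(\QQ^\times,\{1,-1\})$ noted after Proposition~\ref{prop:auto}.

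Finally I would check compatibility of the group laws. Given $\vartheta_i(a) = \chi_i(\det a)\, g_i a g_i^{-1}$ for $i = 1,2$, and using that $\vartheta_2$ preserves the determinant (Proposition~\ref{prop:auto}), one computes
\[
(\vartheta_1\circ\vartheta_2)(a) = \chi_1(\det a)\,\chi_2(\det a)\,(g_1 g_2)\, a\, (g_1 g_2)^{-1},
\]
so the character attached to the composite is the pointwise product $\chi_1\chi_2$. Hence the bijection with $\Hom(\QQ^\times,\{1,-1\})$ is a group isomorphism for pointwise multiplication, which is the assertion.

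I expect no real obstacle to remain: the substantial work is already done in Lemma~\ref{lmm:auto-preserves-forget} (reducing topos automorphisms to monoid automorphisms, the delicate point being that an autoequivalence cannot collapse the two-element sheaf witnessing $X \subset \hat X$) and in Proposition~\ref{prop:auto} (via Hua's theorem on automorphisms of $\GL_2(\QQ)$ together with a Smith normal form argument). The one point needing a moment's care at this stage is that $\chi$ is an invariant of $\vartheta$ itself and not merely of a chosen presentation of it, which the trace argument above settles.
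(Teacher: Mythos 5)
Your proof is correct and follows essentially the same route as the paper: Lemma~\ref{lmm:auto-preserves-forget} together with Corollary~\ref{cor:monoid-topos-auto} reduces the problem to computing $\out(\M_2^\ns(\ZZ))$, which is then identified with $\Hom(\QQ^\times,\{1,-1\})$ via Proposition~\ref{prop:auto} exactly as in the text (your trace argument even makes explicit the uniqueness claim the paper only asserts, allowing for $g$ and $h$ to differ by $\pm I_2$). One tiny slip: determinants of elements of $\M_2^\ns(\ZZ)$ range over the nonzero integers rather than all of $\QQ^\times$, but since $\QQ^\times$ is generated by the nonzero integers this does not affect the conclusion $\chi_1=\chi_2$.
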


\subsection{The \texorpdfstring{$ax+b$}{ax+b} monoid} \label{ssec:pz}

In a recent paper of Connes and Consani \cite{ConnesConsaniLifts}, the subsets
\begin{equation}
\bar{P}(R)=\left\{ \begin{pmatrix}
a & b \\
0 & 1 
\end{pmatrix} ~:~ a \in R,~ b \in R  \right\}
\end{equation}
are introduced, for an arbitrary commutative ring $R$. They are used to study \emph{parabolic} $\QQ$-lattices \cite[Definition 6.1, p.~50]{ConnesConsaniLifts}. For example, the parabolic $\QQ$-lattices up to commensurability are given by
\begin{equation}
\mathcal{C}^0_\QQ = P^+(\QQ)\backslash \left(\bar{\P}(\AA_f) \times \P^+(\mathbb{R}) \right),
\end{equation}
where the superscript $+$ means that we take the subset of matrices $\begin{pmatrix}
a & b \\
0 & 1
\end{pmatrix}$ with $a > 0$ \cite[Theorem 6.1.(ii), p.~52]{ConnesConsaniLifts}.

In this subsection we will study the submonoid $\bar{\P}^\ns(\ZZ) \subset \bar{\P}(\ZZ)$ consisting of matrices with nonzero determinant, and the associated topos $\setswith{\bar{\P}^\ns(\ZZ)}$. We will show that the topos has three nice properties as a setting for the Riemann Hypothesis 
\begin{enumerate}
\item its topos points are given by $\ZZZ^\times\backslash\AA_f/\QQ^\times$, so by \cite{llb-covers} the topos points are the same as for the underlying topos of the Arithmetic Site of \cite{ConnesConsani};
\item its zeta function is the Riemann zeta function $\zeta(s)$;
\item its group of topos automorphisms is $\ZZ/2\ZZ$.
\end{enumerate}

So $\setswith{\bar{\P}^\ns(\ZZ)}$ resembles the Arithmetic Site (without structure sheaf), but this time there are only two automorphisms.

The units of the monoid $\bar{\P}^\ns(\ZZ)$ are given by the matrices $\begin{pmatrix}
\pm 1 & b \\
0 & 1
\end{pmatrix}$, and it is easy to see that the elements of the quotient $Y = \bar{\P}^\ns(\ZZ)^\times \backslash \bar{\P}^\ns(\ZZ)$ have unique representatives of the form $\begin{pmatrix}
a & 0 \\
0 & 1
\end{pmatrix}$, $a > 0$. There is an obvious inclusion $Y \subset X$, with $X = \GL_2(\ZZ)\backslash\M_2^\ns(\ZZ)$ as before. This inclusion puts a partial order on $Y$, and it is continuous for the induced topologies (with upwards closed sets as opens). Let $p^* : \setswith{\bar{\P}^\ns(\ZZ)} \longrightarrow \sets$ be a topos-theoretic point. Then completely analogously to the situation in Subsection \ref{ssec:points}, we can show that $p^* = x^*\pi^*$, where $x^*$ is a point of $\sh(Y)$ and $\pi^*$ is the pullback part of the geometric morphism
\begin{equation}
\begin{tikzcd}[column sep=large]
\sh(Y) \ar[r,bend left,"{\pi_!}"] \ar[r,bend right,"{\pi_*}"] 
& \setswith{\bar{\P}^\ns(\ZZ)} \ar[l,"{\pi^*}"']
\end{tikzcd}.
\end{equation}
So the topos points of $\setswith{\bar{\P}^\ns(\ZZ)}$ are exactly the topos points of $\sh(Y)$
modulo some equivalence relation. But the commutative diagram
\begin{equation}
\begin{tikzcd}
\sh(Y) & \sh(X) \ar[l] \\
\setswith{\bar{\P}^\ns(\ZZ)} \ar[u] & \setswith{\bar{\P}^\ns(\ZZ)} \ar[u] \ar[l]
\end{tikzcd}
\end{equation}
induces a commutative diagram between the spaces of points
\begin{equation}
\begin{tikzcd}
\hat{Y} \ar[r] \ar[d] & \hat{X} \ar[d] \\
\mathsf{Pts}(\setswith{\bar{\P}^\ns(\ZZ)}) \ar[r] & \mathsf{Pts}(\setswith{\M_2^\ns(\ZZ)})
\end{tikzcd},
\end{equation}
where $\hat{Y}$ and $\hat{X}$ are the sobrifications of $Y$ resp.\ $X$. So if two points $y_1, y_2 \in \hat{Y}$ are equivalent, then there is some $g \in \GL_2(\QQ)$ such that $y_2 = y_1 g$. Conversely, it is easy to see that if $y_2 = y_1g$, then $y_1$ and $y_2$ define the same point of $\setswith{\bar{\P}^\ns(\ZZ)}$.

Recall that the action of $\GL_2(\QQ)$ on $\hat{X}$ was defined using the identification
\begin{equation}
\hat{X} = \GL_2(\ZZZ)\backslash\M_2(\ZZZ).
\end{equation}
Under this identification we find
\begin{equation}
\hat{Y} = \left\{ \begin{pmatrix}
z & 0 \\
0 & 1
\end{pmatrix} ~:~  z \in \ZZZ^\times\backslash\ZZZ  \right\} ~\subset~ \hat{X}.
\end{equation}
To determine the points of $\setswith{\bar{\P}^\ns(\ZZ)}$, we have to quotient out the action of $\GL_2(\QQ)$. We then get the following theorem.

\begin{theorem}
The topos points of $\setswith{\bar{\P}^\ns(\ZZ)}$ are given by
\[
\ZZZ^\times \backslash \ZZZ / \NN_+^\times ~=~ \ZZZ^\times \backslash \AA_f / \QQ^\times.
\]
In particular, by \cite{llb-covers} they agree with the topos points of $\setswith{\NN_+^\times}$, the underlying topos for the Arithmetic Site of \cite{ConnesConsani}.
\end{theorem}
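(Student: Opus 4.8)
The plan is to combine the reduction obtained just above with a one-line determinant computation. As explained there, a point of $\setswith{\bar{\P}^\ns(\ZZ)}$ is the same datum as a point of $\sh(Y)$, modulo the relation $y_1 \sim y_2$ whenever $y_2 = y_1 g$ for some $g \in \GL_2(\QQ)$; and the space of points of $\sh(Y)$ is
\[
\hat{Y} ~=~ \left\{ \begin{pmatrix} z & 0 \\ 0 & 1 \end{pmatrix} : z \in \ZZZ^\times\backslash\ZZZ \right\} ~\subseteq~ \GL_2(\ZZZ)\backslash\M_2(\ZZZ) ~\subseteq~ \GL_2(\ZZZ)\backslash\M_2(\AA_f),
\]
with $\GL_2(\QQ)$ acting by right multiplication inside $\GL_2(\ZZZ)\backslash\M_2(\AA_f)$. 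So I have to compute $\hat{Y}/\!\!\sim$, and the claim is that sending the class of $\left(\begin{smallmatrix} z & 0 \\ 0 & 1\end{smallmatrix}\right)$ to the class of $z$ identifies it with $\ZZZ^\times\backslash\AA_f/\QQ^\times$.

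First I would check that this map is well defined, since $\left(\begin{smallmatrix} u & 0 \\ 0 & 1\end{smallmatrix}\right) \in \GL_2(\ZZZ)$ for $u \in \ZZZ^\times$, and surjective: every finite adele can be written as $\tfrac1n z$ with $n \in \NN_+^\times$ and $z \in \ZZZ$ by clearing denominators at the finitely many primes where it is not integral, so it has the same class as an element of $\ZZZ$. The same clearing-of-denominators remark yields the equality $\ZZZ^\times\backslash\ZZZ/\NN_+^\times = \ZZZ^\times\backslash\AA_f/\QQ^\times$ asserted in the statement, and the identification with $\mathsf{Pts}(\setswith{\NN_+^\times})$ is then immediate from \cite{llb-covers}.

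The heart of the argument is that the fibres of this map are exactly the classes of $\sim$. On the one hand, if $z_2 = u z_1 q$ with $u \in \ZZZ^\times$ and $q \in \QQ^\times$, then
\[
\begin{pmatrix} z_2 & 0 \\ 0 & 1 \end{pmatrix} ~=~ \begin{pmatrix} u & 0 \\ 0 & 1 \end{pmatrix}\begin{pmatrix} z_1 & 0 \\ 0 & 1 \end{pmatrix}\begin{pmatrix} q & 0 \\ 0 & 1 \end{pmatrix}
\]
is an identity in $\M_2(\AA_f)$ with the left factor in $\GL_2(\ZZZ)$ and the right one in $\GL_2(\QQ)$, so the two points are $\sim$-equivalent. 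On the other hand, if $y_2 = y_1 g$ then $\left(\begin{smallmatrix} z_2 & 0 \\ 0 & 1\end{smallmatrix}\right) = h\left(\begin{smallmatrix} z_1 & 0 \\ 0 & 1\end{smallmatrix}\right)g$ for some $h \in \GL_2(\ZZZ)$, and taking determinants in the ring $\AA_f$ gives $z_2 = \det(h)\,z_1\det(g)$ with $\det(h)\in\ZZZ^\times$ and $\det(g)\in\QQ^\times$, so $z_1$ and $z_2$ have the same image. This gives the claimed bijection.

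I expect the only genuinely delicate point to be bookkeeping about where the equalities live: the relation ``$y_2 = y_1 g$'' has to be read in $\GL_2(\ZZZ)\backslash\M_2(\AA_f)$ and not in $\GL_2(\ZZZ)\backslash\M_2(\ZZZ)$, since a $\GL_2(\QQ)$-translate of a point of $\hat{Y}$ need not land back in $\M_2(\ZZZ)$. Once one is careful about this there is no real obstacle, because passing to the determinant collapses the nonabelian ambiguity coming from $\GL_2(\ZZZ)$ and $\GL_2(\QQ)$. In particular the degenerate elements of $\hat{Y}$, where some component of $z$ vanishes and $\left(\begin{smallmatrix} z & 0 \\ 0 & 1\end{smallmatrix}\right)$ fails to be invertible over $\AA_f$, require no separate treatment: the identity $z_2 = \det(h)\,z_1\det(g)$ remains a bona fide equation in the ring $\AA_f$.
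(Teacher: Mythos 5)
Your proposal is correct and follows essentially the same route as the paper: it takes the preceding reduction (points of $\setswith{\bar{\P}^\ns(\ZZ)}$ are the points $\hat{Y}$ of $\sh(Y)$ modulo the $\GL_2(\QQ)$-action, read inside $\GL_2(\ZZZ)\backslash\M_2(\AA_f)$) and then identifies the quotient with $\ZZZ^\times\backslash\AA_f/\QQ^\times$. Your determinant computation correctly fills in the orbit identification that the paper leaves implicit in the sentence ``we have to quotient out the action of $\GL_2(\QQ)$,'' so this is the same proof with one step made explicit.
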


The embedding $Y \subset X$ allows us to associate a zeta function to $\setswith{\bar{\P}^\ns(\ZZ)}$, namely
\begin{equation}
\zeta_{\bar{\P}}(s) = \sum_{a \in Y} \det(a)^{-s}.
\end{equation}
Clearly $\zeta_{\bar{\P}}(s)$ is the Riemann zeta function $\zeta(s)$.

Now we show that $\setswith{\bar{\P}^\ns(\ZZ)}$ has only two topos automorphisms. We first determine the outer automorphisms of $\bar{\P}^\ns(\ZZ)$.

\begin{proposition}
The outer automorphism group of $\bar{\P}^\ns(\ZZ)$ is isomorphic to $\ZZ/2\ZZ$.
\end{proposition}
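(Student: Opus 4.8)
The plan is to follow the template of Proposition~\ref{prop:auto}: embed $\bar\P^\ns(\ZZ)$ into a group over $\QQ$, classify the automorphisms of that group, and then track which ones survive over $\ZZ$ modulo inner automorphisms. First I would check that $\bar\P^\ns(\ZZ)$ is cancellative and satisfies the left Ore condition — given $\begin{pmatrix}a_1&b_1\\0&1\end{pmatrix}$ and $\begin{pmatrix}a_2&b_2\\0&1\end{pmatrix}$, the matrices $\begin{pmatrix}a_2&a_1b_2-a_2b_1\\0&1\end{pmatrix}$ and $\begin{pmatrix}a_1&0\\0&1\end{pmatrix}$ form a common left multiple — so that its groupification is the group of left fractions $\bar\P^\ns(\QQ)=\left\{\begin{pmatrix}a&b\\0&1\end{pmatrix}:a\in\QQ^\times,\ b\in\QQ\right\}\cong\QQ^\times\ltimes\QQ$, the rational ``$ax+b$'' group. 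Every monoid automorphism $\vartheta$ of $\bar\P^\ns(\ZZ)$ then extends uniquely to a group automorphism $\hat\vartheta$ of $\bar\P^\ns(\QQ)$.

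The heart of the matter, playing the role of Hua's theorem in Proposition~\ref{prop:auto}, is that $\bar\P^\ns(\QQ)$ has no outer automorphisms. The translation subgroup $T=\left\{\begin{pmatrix}1&b\\0&1\end{pmatrix}\right\}\cong(\QQ,+)$ is the commutator subgroup of $\bar\P^\ns(\QQ)$ — here one uses that $\QQ^\times$ is large enough that $(a-1)\QQ=\QQ$ already for $a=2$ — hence $T$ is characteristic and $\hat\vartheta(T)=T$. Thus $\hat\vartheta|_T$ is multiplication by some $q\in\QQ^\times$ and $\hat\vartheta$ induces an automorphism $\alpha$ of $\bar\P^\ns(\QQ)/T\cong\QQ^\times$; intertwining the (tautological) conjugation action of the quotient on $T$ gives $qab=\alpha(a)\,qb$ for all $a,b$, so $\alpha=\mathrm{id}$. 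With $\alpha$ trivial, the deviation of $\hat\vartheta$ from a conjugation along a section $\QQ^\times\hookrightarrow\bar\P^\ns(\QQ)$ is a $1$-cocycle $\QQ^\times\to\QQ$ for the standard action, and $H^1(\QQ^\times,\QQ)=0$ since $\QQ$ is a uniquely divisible $\QQ^\times$-module, so this cocycle is a coboundary. Together with the scaling by $q$ (itself conjugation by $\begin{pmatrix}q&0\\0&1\end{pmatrix}$), we get $\hat\vartheta=\iota_g$ for a single $g=\begin{pmatrix}u&v\\0&1\end{pmatrix}$, i.e.\ $\vartheta\!\begin{pmatrix}a&b\\0&1\end{pmatrix}=\begin{pmatrix}a&ub+(1-a)v\\0&1\end{pmatrix}$.

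It remains to descend to $\ZZ$. Requiring that $\vartheta$ map $\bar\P^\ns(\ZZ)$ \emph{onto} itself forces $u\in\{\pm1\}$ and pins down the admissible $v$, so that $\aut(\bar\P^\ns(\ZZ))$ is exactly the group of maps $\begin{pmatrix}a&b\\0&1\end{pmatrix}\mapsto\begin{pmatrix}a&\pm b+(a-1)t\\0&1\end{pmatrix}$ with $t\in\ZZ$, isomorphic to $\ZZ\rtimes\ZZ/2$; comparing with $\inn(\bar\P^\ns(\ZZ))$, the conjugations by the units $\begin{pmatrix}\pm1&b\\0&1\end{pmatrix}$, one reads off $\out(\bar\P^\ns(\ZZ))\cong\ZZ/2$.

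The main obstacle is the middle step: proving that the rational affine group is complete, which rests on the two facts highlighted above — that the translations are precisely the derived subgroup (this fails over $\ZZ$, where the analogous group $\mathrm{Hol}(\ZZ)=D_\infty$ does have outer automorphisms) and the vanishing of $H^1(\QQ^\times,\QQ)$. The final descent is then a finite computation, but one to carry out attentively, since the exact value of $\out$ depends on correctly identifying which conjugations are realised by units of $\bar\P^\ns(\ZZ)$.
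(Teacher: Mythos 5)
Your route --- groupify to the rational $ax+b$ group $\QQ^\times\ltimes\QQ$, show that group is complete using the characteristic translation subgroup and the vanishing of $H^1(\QQ^\times,\QQ)$, then descend to $\ZZ$ --- is genuinely different from the paper's proof, which works entirely inside the monoid with the generators $t$, the diagonal matrices $n$, and the relation $nt=t^nn$; in fact your cocycle argument is the paper's identity $k(n)=(n-1)k(2)$ in disguise. Up to the last step your conclusion is correct: $\aut(\bar{\P}^\ns(\ZZ))$ consists exactly of the maps $\begin{pmatrix}a&b\\0&1\end{pmatrix}\mapsto\begin{pmatrix}a&\pm b+(a-1)t\\0&1\end{pmatrix}$ with $t\in\ZZ$, a group isomorphic to $\ZZ\rtimes\ZZ/2\ZZ$. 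One caveat along the way: ``$\QQ$ is uniquely divisible'' is not by itself a reason for $H^1(\QQ^\times,\QQ)=0$ --- for the trivial action the module is equally divisible yet $H^1=\Hom(\QQ^\times,\QQ)\neq 0$; what you need is the multiplication action together with centrality of $2$, e.g.\ $c(2b)=2c(b)+c(2)$ and $c(b\cdot 2)=bc(2)+c(b)$ give $c(b)=(b-1)c(2)$, which is a coboundary.

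The genuine gap is the final ``reading off''. Since $\bar{\P}^\ns(\ZZ)$ only excludes determinant zero, its units are $\begin{pmatrix}\pm1&m\\0&1\end{pmatrix}$, exactly as you say; but conjugation by $\begin{pmatrix}-1&m\\0&1\end{pmatrix}$ (an involution) sends $\begin{pmatrix}a&b\\0&1\end{pmatrix}$ to $\begin{pmatrix}a&-b+m(1-a)\\0&1\end{pmatrix}$, and in particular conjugation by $\begin{pmatrix}-1&0\\0&1\end{pmatrix}$ is precisely the sign flip $b\mapsto-b$. Letting the sign and $m$ vary, the inner automorphisms already exhaust the whole group $\ZZ\rtimes\ZZ/2\ZZ$ you computed, so the comparison you defer actually yields a trivial quotient rather than $\ZZ/2\ZZ$ --- the very step you flagged as the one to ``carry out attentively'' is the one that fails, and your proof as written does not establish the stated proposition. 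For what it is worth, the paper's own argument stops at the dichotomy $\alpha(t^kn)=t^{\pm k}n$ and likewise never checks that the sign flip fails to be inner (it is inner, by the determinant $-1$ unit above), so you cannot repair the step by appealing to it; the clean fix is to work with the submonoid of matrices with positive determinant, whose unit group reduces to the powers of $t$ --- there your descent argument goes through verbatim and does give an outer automorphism group of order two, generated by $b\mapsto -b$.
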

\begin{proof}
The nonzero integers will be considered as a submonoid, where $n \in \ZZ$, $n \neq 0$, corresponds to the matrix $\begin{pmatrix}
n & 0 \\ 
0 & 1
\end{pmatrix}$. The nonzero integers generate $\bar{\P}^\ns(\ZZ)$, together with the matrix $t \in \begin{pmatrix}
1 & 1 \\
0 & 1
\end{pmatrix}$ and its inverse. Every element has a unique representation in the form $t^b a$, with $a,b \in \ZZ$, $a \neq 0$. Moreover, $nt = t^n n$. In is now easy to see that, if $\alpha$ is an automorphism of $\bar{\P}^\ns(\ZZ)$, then $\alpha(n) = t^{k(n)}n$, for some $k(n) \in \ZZ$ depending on $n$. Because $\alpha(n)$ and $\alpha(2)$ commute, we find
\begin{equation}
k(n) = (n-1)k(2).
\end{equation}
This implies $\alpha(n) = t^{-k(2)}nt^{k(2)}$, so up to an inner automorphism we can assume that $\alpha(n) = n$ for all $n \in \ZZ$, $n \neq 0$. It is now clear that either
\begin{equation}
\alpha(t^k n) = t^k n
\end{equation}
or
\begin{equation}
\alpha(t^k n) = t^{-k}n.
\end{equation}
\end{proof}

\begin{corollary}
The group of topos automorphisms of $\setswith{\bar{\P}^\ns(\ZZ)}$ (up to natural isomorphism) is isomorphic to $\ZZ/2\ZZ$. 
\end{corollary}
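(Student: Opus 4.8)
The plan is to reuse, with $\bar{\P}^\ns(\ZZ)$ in place of $\M_2^\ns(\ZZ)$, the two ingredients that determined the automorphism group in the matrix case: the computation $\out(\bar{\P}^\ns(\ZZ)) \cong \ZZ/2\ZZ$ just carried out, Corollary~\ref{cor:monoid-topos-auto}, and the analogue of Lemma~\ref{lmm:auto-preserves-forget}. First note that $\bar{\P}^\ns(\ZZ)$ is cancellative, being a submonoid of its groupification, the $ax+b$ group $\left\{ \begin{pmatrix} a & b \\ 0 & 1 \end{pmatrix} : a \in \QQ^\times,\ b \in \QQ \right\} \subset \GL_2(\QQ)$. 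So Corollary~\ref{cor:monoid-topos-auto} applies and shows that the topos automorphisms $\Theta$ of $\setswith{\bar{\P}^\ns(\ZZ)}$, up to natural isomorphism, that satisfy $G\Theta \simeq G$ form a group isomorphic to $\out(\bar{\P}^\ns(\ZZ)) \cong \ZZ/2\ZZ$. Hence the only thing left to prove is that \emph{every} autoequivalence $\Theta$ of $\setswith{\bar{\P}^\ns(\ZZ)}$ satisfies $G\Theta \simeq G$.

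This I would establish exactly as Lemma~\ref{lmm:auto-preserves-forget}. We have $G \simeq p^*$, where $p$ is the point attached to the identity matrix; since $I_2$ is the least element of $Y$, the corresponding colimit reduces to a single term and $p^*N = \varinjlim_{m \leq I_2} N = N$. On the other hand $G\Theta$ preserves finite limits and has a right adjoint, hence $G\Theta \simeq q^*$ for some point $q$, which (as in Subsection~\ref{ssec:points}) is represented by an element $x \in \hat{Y}$ with $G\Theta N \cong \varinjlim_{m \leq x} N$, the transition map along $m \leq m'$, $m' = am$, being multiplication by $a$. Now evaluate on the $\bar{\P}^\ns(\ZZ)$-set $N = \{0,1\}$ on which a matrix acts as the identity if it is a unit of $\bar{\P}^\ns(\ZZ)$ and as the constant map onto $0$ otherwise (a legitimate action, pulled back along $\begin{pmatrix} a & b \\ 0 & 1 \end{pmatrix} \mapsto |a| \in \NN_+^\times$; it is the analogue of the test set used for Lemma~\ref{lmm:auto-preserves-forget}). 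If $x \in Y$ then $\{ m \in Y : m \leq x \}$ has greatest element $x$, so $\varinjlim_{m \leq x} N \cong N$ has two elements. If $x \in \hat{Y} \setminus Y$ then for every $m \leq x$ there is $m'$ with $m < m' \leq x$; the matrix $a$ with $m' = am$ is then a non-unit, so the transition map $N \to N$ is constant onto $0$, the elements $0$ and $1$ get identified in the colimit, and $\varinjlim_{m \leq x} N$ is a one-element set. As $\Theta$ is an equivalence it cannot send the two-element object $N$ to the terminal object, so $G\Theta N$ has at least two elements; therefore $x \in Y$.

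It remains to note that every $x \in Y$, say $x = \begin{pmatrix} a & 0 \\ 0 & 1 \end{pmatrix}$ with $a$ a positive integer, represents the same point of $\setswith{\bar{\P}^\ns(\ZZ)}$ as the identity matrix: indeed $x = I_2 \cdot g$ for $g = \begin{pmatrix} a & 0 \\ 0 & 1 \end{pmatrix} \in \GL_2(\QQ)$, and as shown earlier in this subsection the points of $\setswith{\bar{\P}^\ns(\ZZ)}$ are the elements of $\hat{Y}$ modulo the right action of $\GL_2(\QQ)$. Hence $q$ is the point of the identity matrix, $G\Theta \simeq q^* \simeq p^* \simeq G$, and the analogue of Lemma~\ref{lmm:auto-preserves-forget} is proved. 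Together with the first paragraph this gives that the group of topos automorphisms of $\setswith{\bar{\P}^\ns(\ZZ)}$, up to natural isomorphism, is isomorphic to $\out(\bar{\P}^\ns(\ZZ)) \cong \ZZ/2\ZZ$.

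The only step involving genuine (though routine) computation is the colimit evaluation above, and its one real subtlety is the dichotomy $x \in Y$ versus $x \in \hat{Y} \setminus Y$. Writing $x = \begin{pmatrix} z & 0 \\ 0 & 1 \end{pmatrix}$ with $z \in \ZZZ^\times \backslash \ZZZ$ a supernatural number, $x$ lies in $Y$ exactly when $z$ is an ordinary positive integer, equivalently when $\{ m \in Y : m \leq x \}$ has a top element; this is what makes the test set $N$ separate $Y$ from the extra points of its sobrification, just as $\{0,1\}$ with the determinant action separates $X$ from $\hat{X} \setminus X$ in the proof of Lemma~\ref{lmm:auto-preserves-forget}.
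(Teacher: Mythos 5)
Your proposal is correct and follows the paper's own route exactly: invoke Corollary \ref{cor:monoid-topos-auto} (after noting cancellativity) together with the computation $\out(\bar{\P}^\ns(\ZZ)) \cong \ZZ/2\ZZ$, and prove the analogue of Lemma \ref{lmm:auto-preserves-forget}, which the paper simply declares ``completely analogous'' while you spell it out with a correctly adapted test object (units versus non-units in place of the determinant-one condition). Your detailed verification of the dichotomy $x \in Y$ versus $x \in \hat{Y}\setminus Y$ and of the identification of all points of $Y$ with the point of the identity matrix is sound and fills in exactly what the paper leaves implicit.
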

\begin{proof}
By Corollary \ref{cor:monoid-topos-auto}, we need to show that any topos automorphism $\Theta$ of $\setswith{\bar{\P}^\ns(\ZZ)}$ satisfies $G \Theta \simeq G$, where $G$ is the forgetful functor. The proof of this fact is completely analogous to the proof of Lemma \ref{lmm:auto-preserves-forget}.
\end{proof}

\section{Relation to Conway's Big Picture} \label{sec:conway}

Conway's big picture (introduced in \cite{Conway}) is the graph with vertex set $\QQ_+ \times \QQ/\ZZ$ and edges defined by a hyper-distance $\delta$.

We use the notations from \cite{llbMM}: the big picture is denoted by $\mathfrak{P}$, and for each $X = (M,\tfrac{g}{h}) \in \QQ_+ \times \QQ/\ZZ$ we consider the matrices
\begin{equation} \label{eq:alphaX}
\alpha_X = \begin{pmatrix}
M & \tfrac{g}{h} \\
0 & 1
\end{pmatrix} \in \Gamma\backslash\GL_2^+(\QQ)
\end{equation}
where $\Gamma = \PSL_2(\ZZ)$ is the modular group and $\GL_2^+(\QQ)$ the subgroup of $\GL_2(\QQ)$ consisting of the matrices with positive determinant. The hyper-distance $\delta$ is then given by
\begin{equation}
\delta(X,Y) = \det(\alpha_{XY} \alpha_X  \alpha_Y^{-1})
\end{equation}
with $\alpha_{XY}$ the smallest rational number such that $\alpha_{XY}\alpha_X\alpha_Y^{-1} \in \M_2^+(\ZZ)$; further, there is an edge between $X$ and $Y$ whenever $\delta(X,Y)$ is a prime number (see \cite[\mbox{p. 7}]{llbMM} for all this).

We claim that we can embed $\mathfrak{P}$ as a full subgraph of $\GL_2(\ZZ)\backslash\M_2^\ns(\ZZ)$. The embedding is given on vertices by
\begin{equation} \label{eq:betaX}
\QQ_+ \times \QQ/\ZZ,~~(M,\tfrac{g}{h})=X \mapsto \begin{pmatrix}
MN & \tfrac{g}{h}N \\
0 & N
\end{pmatrix} = \beta_X
\end{equation}
where $N \in \NN_+$ is minimal such that $\tfrac{g}{h}N \in \ZZ$. We can assume $0\leq \tfrac{g}{h} < 1$ and then $\beta_X$ is in Hermite normal form so the mapping is injective. Further, the greatest common divisor of the entries of $\beta_X$ is $1$ (i.e.\ the entries are coprime) and conversely every $a \in \GL_2(\ZZ)\backslash\M_2^\ns(\ZZ)$ with this property can be written as $a = \beta_X$ for some $X \in \QQ_+ \times \QQ/\ZZ$. In the notations of Section \ref{sec:topos},
\begin{equation}
\mathfrak{P} ~=~ \{ a \in \GL_2(\ZZ)\backslash\M_2^\ns(\ZZ) ~:~ \lambda_p(a) = 0 \text{ for all primes }p  \}
\end{equation}
where $\lambda_p(a)$ is the level of $a$ at prime $p$.

In the following proposition, we will see that the poset structure on $\mathfrak{P}$ as a subset of $\GL_2(\ZZ)\backslash\M_2^\ns(\ZZ)$ is the same as the poset structure on the big picture as introduced in \cite[Definition 1]{llbMM}, i.e.\ the one given by
\begin{equation} \label{eq:poset-bp}
X \leq Y ~\text{ iff }~ \delta(1,Y)=\delta(X,Y)\delta(1,X).
\end{equation}
A fortiori, the edges of the underlying graphs are the same.

\begin{proposition} \label{prop:bigpicture}
Consider the hyper-distance $\tilde{\delta}$ on $\GL_2(\ZZ)\backslash\M_2^\ns(\ZZ)$ given by
\[
\tilde{\delta}(a,b) = \det(a')\det(b')
\]
where $x = a \wedge b$ and $a = a'x$, $b = b'x$. Then:
\begin{enumerate}
\item $\log \tilde{\delta}(a,b)$ is the weighted distance between $a$ and $b$ where an edge $x \to y$ with $\det(y) = p\det(x)$ has weight $\log(p)$;
\item $\tilde{\delta}(x,y) = \delta(x,y)$ for $x,y \in \mathfrak{P}$;
\item the poset structure on $\mathfrak{P}$ as a subset of $\GL_2(\ZZ)\backslash\M_2^\ns(\ZZ)$ agrees with the poset structure from (\ref{eq:poset-bp}) above \cite[Definition 1]{llbMM};
\item $\mathfrak{P}$ is a fundamental domain for the monoid action of $\NN_+^\times$ on $\GL_2(\ZZ)\backslash\M_2^\ns(\ZZ)$ by scalar multiplication.
\end{enumerate}
\end{proposition}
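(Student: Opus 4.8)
The plan is to prove the four parts in order, since each relies on the preceding ones. For part (1), I would note that the poset $\GL_2(\ZZ)\backslash\M_2^\ns(\ZZ)$ is the product over primes $p$ of the local posets $\GL_2(\ZZ_p)\backslash\M_2(\ZZ_p)$ (restricted to nonzero determinant), and the weighted graph splits accordingly. For a single prime, the edges $x \to y$ with $\det(y) = p\det(x)$ all have weight $\log p$, and the graph structure is the one in Table \ref{table1}. The key observation is that for $a, b$ with meet $x = a \wedge b$, any geodesic from $a$ to $b$ must pass through $x$: going ``down'' from $a$ and then ``up'' to $b$ costs $\log(\det a/\det x) + \log(\det b / \det x)$, and one checks using the tree-like (Bruhat--Tits) structure at each prime that no shortcut exists — any path from $a$ to $b$ has total weight at least $\log(\det a'\det b')$ where $a = a'x$, $b = b'x$. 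So $\log\tilde\delta(a,b) = \log\det(a') + \log\det(b')$ is exactly the weighted distance. I would phrase this by reducing to one prime and invoking the structure of the Bruhat--Tits tree for $\PGL_2(\QQ_p)$, on which $\GL_2(\ZZ_p)\backslash\M_2(\ZZ_p)$ is a ``branched'' version indexed by level.

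For part (2), I would compare the two formulas directly. Recall $\delta(X,Y) = \det(\alpha_{XY}\alpha_X\alpha_Y^{-1})$ with $\alpha_{XY}$ the minimal scalar making this integral. Writing $\beta_X, \beta_Y$ as in \eqref{eq:betaX} (with the $N$'s chosen minimally, so $\lambda_p(\beta_X) = 0$ for all $p$), one has $\beta_X = \begin{pmatrix} N & 0 \\ 0 & N\end{pmatrix}\cdot(\text{something}) $ relating $\beta_X$ and $\alpha_X$ by a scalar. The point is that $\tilde\delta$ on $\mathfrak{P}$, computed via the meet $\beta_X \wedge \beta_Y$, measures exactly the product of the two ``reduced determinants,'' and this matches Conway's recipe of clearing denominators: $\alpha_{XY}$ is precisely the scalar that brings $\alpha_X\alpha_Y^{-1}$ into $\M_2^+(\ZZ)$ with coprime entries, which is the image of the meet construction. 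I would make this precise by checking that under the identification $\beta_X \leftrightarrow \alpha_X$ up to scalars, the operation ``smallest scalar making it integral'' corresponds to dividing out the gcd of entries, and hence $\det(\alpha_{XY}\alpha_X\alpha_Y^{-1})$ equals $\det(a')\det(b')$ with $a' = \beta_X/(\beta_X\wedge\beta_Y)$ etc. This is essentially a bookkeeping computation with Hermite normal forms.

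For part (3), given (2) the poset structures agree almost formally: $X \leq Y$ in $\mathfrak{P} \subset \GL_2(\ZZ)\backslash\M_2^\ns(\ZZ)$ means $\beta_Y = m\beta_X$ for some integral $m$, equivalently $\beta_X = \beta_X \wedge \beta_Y$, equivalently (by part (1), taking $a = $ identity-class, i.e.\ the vertex $1$) the geodesic from $1$ to $Y$ passes through $X$, which by the additivity of weighted distance says $\log\tilde\delta(1,Y) = \log\tilde\delta(1,X) + \log\tilde\delta(X,Y)$, i.e.\ $\delta(1,Y) = \delta(1,X)\delta(X,Y)$ by part (2). That is exactly condition \eqref{eq:poset-bp}. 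One subtlety: I must check that $1 \in \mathfrak{P}$ corresponds to the basepoint used in \eqref{eq:poset-bp}, i.e.\ that $\alpha_1 = I_2$, which is immediate. For part (4), I would argue that every $a \in \GL_2(\ZZ)\backslash\M_2^\ns(\ZZ)$ can be written uniquely as $a = n \cdot b$ with $n \in \NN_+$ the gcd of the entries of $a$ (equivalently $n = \prod_p p^{\lambda_p(a)}$) and $b$ having all entries coprime, i.e.\ $\lambda_p(b) = 0$ for all $p$; by the description $\mathfrak{P} = \{a : \lambda_p(a) = 0 \ \forall p\}$ this $b$ is the unique element of $\mathfrak{P}$ in the $\NN_+^\times$-orbit of $a$. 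Uniqueness is clear since scalar multiplication by $n$ raises each $\lambda_p$ by $v_p(n)$, so two elements of $\mathfrak{P}$ in the same orbit must be equal. Thus $\mathfrak{P}$ meets each orbit exactly once, which is the definition of a fundamental domain.

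The main obstacle is part (1): establishing that the meet $a \wedge b$ always lies on \emph{every} geodesic, i.e.\ that the weighted distance is genuinely additive through the meet and that no shortcut exists through some common ``upper bound'' or sideways move. This requires understanding the local structure at each prime well enough — in particular that the subgraph at a fixed level is a $(p+1)$-regular tree (Bruhat--Tits) and that moving to a higher level strictly increases distance in a controlled way, as encoded in Table \ref{table1}. Once the local geodesic structure is pinned down, parts (2), (3), (4) are comparatively routine bookkeeping with Hermite normal forms and valuations.
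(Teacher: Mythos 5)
Your overall architecture matches the paper's (reduce part (1) prime by prime, then get (2)--(4) by comparison of formulas, Hermite normal forms, and the coprime-entries description of $\mathfrak{P}$), and your arguments for (3) and (4) are essentially the paper's. But at the crux of part (1) there is a genuine gap, and the claim you lean on is false as stated. It is not true that every geodesic from $a$ to $b$ passes through the meet $x = a \wedge b$, and the local graph $\GL_2(\ZZ_p)\backslash\M_2^\ns(\ZZ_p)$ is not the Bruhat--Tits tree: for $a = \mathrm{diag}(p,1)$ and $b = \mathrm{diag}(1,p)$ the meet is the class of $I_2$, yet $a,\, pI_2,\, b$ is a second geodesic of weighted length $2\log p$ avoiding it; the graph contains such $4$-cycles through common upper bounds, and only its level-$0$ part $\{\lambda_p = 0\}$ is the $(p+1)$-regular tree. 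What the proposition actually needs is the weaker additivity statement $d(a,b) = d(a,x) + d(x,b)$, i.e.\ a lower bound showing that no path --- in particular none passing through higher levels --- is shorter than $\log(\det a' \det b')$. You correctly identify this as ``the main obstacle,'' but your proposal only asserts it (``one checks using the tree-like structure''), so the key step is missing, and the stronger form you state cannot be the route to it.

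The paper closes exactly this gap by a two-step reduction. First, if $a \wedge b = 1$ with $a, b \neq 1$, then $p \nmid a$ and $p \nmid b$: if $p \mid a$, any $y \leq b$ with $\det(y) = p$ satisfies $y \leq pI_2 \leq a$, hence $y \leq a \wedge b = 1$, which is impossible, forcing $b = 1$. So both $a$ and $b$ lie in the level-$0$ part (locally in $\mathfrak{P}$), a minimal path may be taken inside it, and there the distance is $\log(\det a \cdot \det b)$. Second, for general $x = a \wedge b$, right multiplication by $x^{-1}$ is an isometry from $\uparrow x = \{c : c \geq x\}$ onto $X_p$ sending $a, b, x$ to $a', b', 1$, and a minimal path between elements of $\uparrow x$ may be assumed to stay in $\uparrow x$; this reduces to the first case. (Even in the paper the two ``may be assumed'' statements deserve a word, e.g.\ via the level/niveau bookkeeping of Table \ref{table1}, but they are the mechanism you would need to supply.) Your direct Hermite-normal-form comparison for (2) is a legitimate alternative to the paper's ``follows from (1),'' and (4) is fine; but (1), and with it (2) and (3) as you derive them, cannot be considered proved until the additivity-through-the-meet lower bound is actually established.
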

\begin{proof} \ 
\begin{enumerate}
\item We denote the weighted distance between $a$ and $b$ by $d(a,b)$. Note that
\[
d(a,b) = \sum_p d_p(a_p,b_p)
\]
where $a_p$ and $b_p$ are the projections on $\GL_2(\ZZ_p)\backslash\M_2^\ns(\ZZ_p)$ of $a$ resp.\ $b$, and $d_p$ is the weighted distance function in $\GL_2(\ZZ_p)\backslash\M_2^\ns(\ZZ_p)$, where every edge has weight $\log(p)$. As a weighted graph, $\GL_2(\ZZ_p)\backslash\M_2^\ns(\ZZ_p)$ can be identified with
\[
X_p = \{a \in \GL_2(\ZZ)\backslash\M_2^\ns(\ZZ) ~:~ a_q = 1 \text{ for all }q \neq p\}.
\]
It follows from the above that it is enough to prove the statement for $a,b \in X_p$. First assume that $x = a \wedge b = 1$, $a \neq 1$ and $b \neq 1$. We claim that then $a,b \in \mathfrak{P}$, i.e.\ $p \nmid a$ and $p \nmid b$. Indeed, suppose $p \mid a$. Any divisor $y \leq b$ with $\det(y) = p$ then also divides $a$. This shows $b = 1$, a contradiction. So $p \nmid a$ and analogously $p \nmid b$. Take a path of minimal length from $a$ to $b$. We can assume that this path does not leave $\mathfrak{P}$, so it is of length $\det(a)\det(b)$. Now suppose that $x = a\wedge b \neq 1$. Again we take a path of minimal length from $a$ to $b$ and we can assume that this path does not leave
\[
\uparrow x = \{ a \in X_p ~:~ a \geq x \}.
\]
Multiplication by $x^{-1}$ on the right is an isometry from $\uparrow x$ to $X_p$, and replaces $a$ by $a'$, $b$ by $b'$ and $x$ by $1$. From the previous case we find
\[
d(a,b) = d(a',b') = \det(a')\det(b').
\]
\item This follows directly from (1).
\item It is enough to show that
\[
a \leq b ~\text{ iff }~ \tilde{\delta}(1,b) = \tilde{\delta}(a,b)\tilde{\delta}(1,a).
\]
This easily follows from $(1)$, for example by induction on the number of prime divisors of $\tilde{\delta}(x,y)$ (counted with multiplicity).
\item This is clear from the description of $\mathfrak{P}$ as consisting of the matrices for which the entries are coprime.
\end{enumerate}
\end{proof}

From now on, we use the notation $\fM = \GL_2(\ZZ) \backslash \M_2^\ns(\ZZ)$. We can associate to it the zeta function
\begin{equation}
\zeta_\fM(s) = \sum_{x \in \fM} \det(x)^{-s}.
\end{equation}
It is proved in \cite{Saito} that
\begin{equation}
\zeta_\fM(s) = \zeta(s) \zeta(s-1).
\end{equation}
Note that this is the same as the Hasse-Weil zeta function for $\PP^1_\ZZ$.

\begin{remark}
Saito in \cite{Saito} considers $\GL_n(R)\backslash\M_n^\ns(R)$ for $n$ a natural number and $R$ a principal ideal domain, and shows that the zeta function is equal to
\begin{equation}
\zeta_R(s)\zeta_R(s-1)\cdots\zeta_R(s-n+1).
\end{equation}
(Saito uses the notation $P_{\M(n,R)^\times,\deg}(\exp(-s))$ for this zeta function.) In our case, it follows directly from the Hermite normal form that the number of elements in $\fM$ with determinant $n$ is given by $\sigma(n)$, so
\begin{equation}
\zeta_\fM(s) = \sum_{n} \sigma(n) n^{-s} = \zeta(s) \zeta(s-1).
\end{equation}
So following Saito's approach is not necessary in this easy case. Also, the zeta function $\zeta(s)\zeta(s-1)$ for $\M_2^\ns(\ZZ)$ already appeared implicitly in the work of Connes and Marcolli (see e.g.\ \cite{ConnesMarcolli}), when they show that it is the partition function of their $\GL_2$-system.
\end{remark}

\begin{proposition}
Consider the big picture $\fP$ as a subgraph of $\fM$. Then its zeta function is given by
\begin{equation*}
\zeta_\fP(s) ~=~ \sum_{x \in \fP} \det(x)^{-s} ~=~ \frac{\zeta(s)\zeta(s-1)}{\zeta(2s)}.
\end{equation*}
\end{proposition}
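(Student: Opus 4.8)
The plan is to compute the Dirichlet series $\zeta_\fP(s) = \sum_{x \in \fP} \det(x)^{-s}$ by counting, for each positive integer $n$, the number of elements $x \in \fP$ with $\det(x) = n$. Recall from the discussion preceding Proposition \ref{prop:bigpicture} that $\fP$ consists exactly of those $x \in \fM = \GL_2(\ZZ)\backslash\M_2^\ns(\ZZ)$ whose Hermite normal form representative has coprime entries, equivalently $\lambda_p(x) = 0$ for all primes $p$. So the first step is: among the $\sigma(n)$ elements of $\fM$ with determinant $n$ (counted via Hermite normal form as in the Remark), determine how many have the property that the greatest common divisor of all entries is $1$.

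Concretely, an element of $\fM$ with determinant $n$ is a Hermite normal form matrix $\begin{pmatrix} a & b \\ 0 & d\end{pmatrix}$ with $ad = n$, $a,d > 0$, and $0 \leq b < d$; the entries have $\gcd$ equal to $\gcd(a,b,d)$. I would introduce the arithmetic function $f(n) = \#\{ x \in \fP : \det(x) = n\}$ and observe that every $x \in \fM$ of determinant $n$ can be written uniquely as $e \cdot x'$ with $e = \gcd$ of the entries (scalar multiplication) and $x' \in \fP$ of determinant $n/e^2$; this is exactly statement (4) of Proposition \ref{prop:bigpicture}, that $\fP$ is a fundamental domain for the $\NN_+^\times$-action on $\fM$ by scalars. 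Hence $\sigma(n) = \sum_{e^2 \mid n} f(n/e^2)$, i.e.\ at the level of Dirichlet series
\begin{equation}
\zeta(s)\zeta(s-1) = \zeta_\fM(s) = \zeta(2s)\, \zeta_\fP(s),
\end{equation}
since $\sum_n (\sum_{e^2\mid n} f(n/e^2)) n^{-s} = \big(\sum_e e^{-2s}\big)\big(\sum_m f(m) m^{-s}\big)$. Dividing through gives $\zeta_\fP(s) = \zeta(s)\zeta(s-1)/\zeta(2s)$, which is the claim.

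The only real content is the identity $\sigma(n) = \sum_{e^2 \mid n} f(n/e^2)$, and the main thing to verify carefully is that the decomposition $x = e\cdot x'$ is well-defined and bijective on the level of the quotient $\GL_2(\ZZ)\backslash\M_2^\ns(\ZZ)$ — that is, that the $\gcd$ of the entries is a well-defined invariant of a $\GL_2(\ZZ)$-orbit (clear, since left multiplication by a unimodular matrix does not change the ideal generated by the entries) and that dividing by it lands in $\fP$ with the stated determinant. This is precisely encoded in the level function $\lambda_p$ and its behaviour under scalar multiplication recalled in Subsection \ref{ssec:posets}, so there is no real obstacle; the computation is a formal manipulation of Dirichlet series once the fundamental-domain property (Proposition \ref{prop:bigpicture}(4)) is invoked. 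Alternatively, one can avoid even mentioning $f(n)$ and argue directly: the map $\NN_+^\times \times \fP \to \fM$, $(e,x')\mapsto e\cdot x'$, is a bijection, and $\det(e\cdot x') = e^2\det(x')$, so $\zeta_\fM(s) = \zeta(2s)\zeta_\fP(s)$ by multiplicativity of $\det^{-s}$ over the product, and one concludes using $\zeta_\fM(s) = \zeta(s)\zeta(s-1)$ from \cite{Saito}.
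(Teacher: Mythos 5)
Your proposal is correct and follows essentially the same route as the paper: invoke Proposition \ref{prop:bigpicture}(4) to decompose $\fM = \bigsqcup_{e \in \NN_+} e\cdot\fP$, note that scalar multiplication by $e$ multiplies the determinant by $e^2$, so $\zeta_\fM(s) = \zeta(2s)\,\zeta_\fP(s)$, and conclude from $\zeta_\fM(s) = \zeta(s)\zeta(s-1)$. The Dirichlet-convolution reformulation $\sigma(n) = \sum_{e^2 \mid n} f(n/e^2)$ is just a restatement of this same argument, not a different proof.
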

\begin{proof}
In Proposition \ref{prop:bigpicture} we proved that $\mathfrak{P}$ is a fundamental domain for the action of $\NN_+^\times$ on $\fM$. So $\fM$ can be written as a disjoint union
\begin{equation}
\fM ~=~ \bigsqcup_{n \in \NN_+} n \cdot \fP
\end{equation}
The zeta function for $n \cdot \fP$ is given by $n^{-2s} \cdot \zeta_\fP(s)$, so we get
\begin{equation}
\zeta_\fM(s) = \sum_{n \in \NN_+} n^{-2s} \cdot \zeta_\fP(s)  = \zeta(2s)\zeta_\fP(s).
\end{equation}
The statement then follows from $\zeta_\fM(s) = \zeta(s)\zeta(s-1)$.
\end{proof}

\begin{remark}
We can also write the zeta function for $\fP$ as
\begin{equation}
\zeta_\fP(s) = \sum_{X \in \QQ_+\!\times\QQ/\ZZ} \det(\beta_X).
\end{equation}
with $\beta_X$ as in (\ref{eq:betaX}).
The analogous definition
\begin{equation}
\xi_\fP(s) = \sum_{X \in \QQ_+\!\times\QQ/\ZZ} \det(\alpha_X)
\end{equation}
with $\alpha_X$ as in (\ref{eq:alphaX}) is not well-defined, because for a fixed $n$ there are infinitely many $X \in \QQ_+ \times \QQ/\ZZ$ with $\det(\alpha_X) = n$.
\end{remark}

\section{Applications} \label{sec:applications}

In Section \ref{sec:topos}, we proved that the set of points for $\setswith{\M_2^\ns(\ZZ)}$ is given by
\begin{equation}
\GL_2(\ZZ) \backslash \M_2(\AA_f) \slash \GL_2(\QQ)
\end{equation}
and that, additionally, this double quotient classifies abelian groups $\ZZ^2 \subseteq A \subseteq \QQ^2$ up to isomorphism. Here $\AA_f = (\prod_p \ZZ_p) \otimes \QQ$ denotes the finite ad\`eles.

In this section we discuss some applications, with as underlying goal to determine to what extent this description is suitable for calculations.

\subsection{Relation to \texorpdfstring{$\ext^1(\QQ,\ZZ)$}{Ext(Q,Z)}} 

The Ext-group $\ext^1(\QQ,\ZZ)$ can be written as
\begin{equation} \label{eq:ext}
\ext^1(\QQ,\ZZ) \cong \AA_f / \QQ.
\end{equation}
For a proof using the long exact sequence we refer to Boardman's note \cite{Boardman}. We also refer to \cite{} (and the blogpost \cite{}) where an analogon for the full ring of adeles is discussed. In this subsection we provide an alternative proof of (\ref{eq:ext}) using Theorem \ref{thm:points}. From this approach we automatically get a criterion describing when two extension of $\QQ$ by $\ZZ$ are isomorphic as abelian groups (equivalent extensions are always isomorphic as abelian groups, but the converse does not hold).

We saw in the proof of Theorem \ref{thm:points} that for every subgroup $\ZZ^2 \subseteq A \subseteq \QQ^2$ there is an $x \in \GL_2(\ZZZ)\backslash\M_2(\ZZZ)$ such that
\begin{equation}
A_x = \varinjlim_{m \leq x} \ZZ^2
\end{equation}
where the filtered colimit is over the $m \leq x$ with $m \in \GL_2(\ZZ)\backslash\M_2^\ns(\ZZ)$ and where a transition map $\ZZ^2 \to \ZZ^2$ corresponding to $m \leq m'$ is given by $a$, with $a$ the matrix such that $m' = am$ (we assume that $m$, $m'$ are in Hermite normal form, in order to fix a matrix representative). Alternatively,
\begin{equation}
A_x = \{ (u,v) \in \QQ^2 ~:~ x \cdot (u,v) \in \ZZZ^2 \}
\end{equation}
where $(u,v)$ is seen as a column vector in $\AA_f^2$ on which $x$ acts by matrix multiplication.

We will focus on the subgroups $A_x$ such that the sequence
\begin{equation}
\begin{tikzcd}[row sep=tiny]
0 \ar[r]& \ZZ \ar[r]      & A_x  \ar[r]         & \QQ \ar[r] & 0  \\
        & n \ar[r,mapsto] & (n,0)               &            &    \\
        &                 & (u,v) \ar[r,mapsto] & v          &    \\
\end{tikzcd}
\end{equation}
is exact. In other words, we consider the subgroups $\ZZ^2 \subseteq A_x \subseteq \QQ^2$ with the properties
\begin{enumerate}[label=(E\arabic*),ref=(E\arabic*)]
\item $(u,0) \in A_x$ implies that $u \in \ZZ$; and \label{E1}
\item for all $v \in \QQ$ there is an $u \in \QQ$ with $(u,v) \in A_x$. \label{E2}
\end{enumerate}
By definition $A_x$ then determines an element $[A_x]\in\ext^1(\QQ,\ZZ)$ and it is easy to see that, conversely, every element of $\ext^1(\QQ,\ZZ)$ is the class of some $A_x$.

We now describe the elements $x \in \GL_2(\ZZZ)\backslash\M_2(\ZZZ)$ such that $A_x$ satisfies \ref{E1} and \ref{E2}. First we introduce the supernatural numbers as subset of the profinite integers.

\begin{definition} \label{def:supernatural}
A supernatural number (or Steinitz number) is a profinite integer $s \in \ZZZ$ such that for each prime $p$ its projection on $\ZZ_p$ (i.e.\ the $p$th component) is either $0$ or a power of $p$. The supernatural numbers will be denoted by $\SS$. They are a set of representatives for $\ZZZ^\times\backslash\ZZZ$.

With $p^\infty$ we denote the supernatural number such that the $p$th component is $0$ and such that the $q$th component is $1$ for each $q \neq p$.

For each natural number $n \in \NN$ we define $s(n)$ to be the supernatural number such that for each prime $p$ its projection on $\ZZ_p$ is $p^k$, where $p^k$ is the largest $p$th power dividing $n$.
\end{definition}
Note that $s(0)=0 = \prod_p p^\infty$ and $s(1) = 1$, but these are the only $n \in \NN$ for which $s(n)=n$. Our definition of the supernatural numbers agrees with the usual definition, apart from the fact that the supernatural numbers are usually defined abstractly as a monoid under multiplication (not as a subset of the profinite integers).

The supernatural numbers $\SS$ come into the picture when considering the Hermite normal form for $x \in \GL_2(\ZZZ)\backslash\M_2(\ZZZ)$. It is given by
\begin{equation}
x = \begin{pmatrix}
s & z \\
0 & s'
\end{pmatrix}
\end{equation}
with $s,s' \in \SS$ and $z \in \ZZZ$. Note that two matrices in Hermite normal form might describe the same element of $\GL_2(\ZZZ)\backslash\M_2(\ZZZ)$, for example
\begin{equation}
\begin{pmatrix}
0 & 1 \\
0 & 0
\end{pmatrix} ~\text{ and }~\begin{pmatrix}
0 & 0 \\
0 & 1
\end{pmatrix}.
\end{equation}
So what are the matrices
\begin{equation*}
x = \begin{pmatrix}
s & z \\
0 & s'
\end{pmatrix}
\end{equation*}
such that $A_x$ satisfies \ref{E1} and \ref{E2}? First we use that $(u,0) \in A_x$ if and only if
\begin{equation}
\begin{pmatrix}
s & z \\
0 & s'
\end{pmatrix} \begin{pmatrix}
u \\ 0
\end{pmatrix} = \begin{pmatrix}
su \\ 0
\end{pmatrix} \in \ZZZ^2,
\end{equation}
which is the case if and only if $su \in \ZZZ$. If $p \mid s$ then $(\tfrac{1}{p}, 0) \in A_x$, so it follows that $A_x$ satisfies \ref{E1} if and only if $s=1$. More generally, $(u,v) \in A_x$ if and only if
\begin{equation}
\begin{pmatrix}
s & z \\
0 & s'
\end{pmatrix} \begin{pmatrix}
u \\ v
\end{pmatrix} = \begin{pmatrix}
su + zv \\
s'v
\end{pmatrix} \in \ZZZ^2.
\end{equation}
Now it is easy to see that \ref{E1} and \ref{E2} hold if and only if $s = 1$ and $s' = 0$. So the matrices under consideration are of the form
\[
x = \begin{pmatrix}
1 & z \\
0 & 0 
\end{pmatrix}.
\]
As a group under multiplication, they can be identified with the additive group of profinite integers $\ZZZ$. Further, suppose that
\begin{equation}
x = \begin{pmatrix}
1 & z \\
0 & 0
\end{pmatrix} ~\text{ and }~ x' = \begin{pmatrix}
1 & z' \\
0 & 0
\end{pmatrix}
\end{equation}
determine an equivalent extension (i.e.\ the same element in $\ext^1(\QQ,\ZZ)$). Then $A_{x'} = g\cdot A_x$ for some $g \in \GL_2(\QQ)$ that preserves both the inclusion of $\ZZ$ and the projection on $\QQ$. We write
\[
g = \begin{pmatrix}
a & b \\
c & d
\end{pmatrix}.
\]
Then $g$ preserves $\ZZ$ if and only if
\begin{equation}
\begin{pmatrix}
a & b \\
c & d
\end{pmatrix} \begin{pmatrix}
1 \\ 0
\end{pmatrix} = \begin{pmatrix}
1 \\ 0
\end{pmatrix},
\end{equation}
in other words, if and only if $a=1$ and $c=0$. Moreover, $g$ preserves the projection on $\QQ$ if and only if
\begin{equation}
\begin{pmatrix}
0 & 1
\end{pmatrix}\begin{pmatrix}
a & b \\
c & d
\end{pmatrix} = \begin{pmatrix}
0 & 1
\end{pmatrix},
\end{equation}
in other words, if and only if $c=0$ and $d=1$. So $g$ is of the form
\[
g = \begin{pmatrix}
1 & b \\
0 & 1
\end{pmatrix}
\]
and we get
\begin{equation}
\begin{pmatrix}
1 & z' \\
0 & 0
\end{pmatrix} = \begin{pmatrix}
1 & z \\
0 & 0
\end{pmatrix} \begin{pmatrix}
1 & -b \\
0 & 1
\end{pmatrix} = \begin{pmatrix}
1 & z - b \\
0 & 0
\end{pmatrix}
\end{equation}
(note that $g \cdot A_x = A_{xg^{-1}}$). This shows
\begin{equation}
\ext^1(\QQ,\ZZ) ~=~ \ZZZ/\ZZ ~=~ \AA_f/\QQ.
\end{equation}

Even if $A_x$ and $A_{x'}$ define non-equivalent extensions, it is still possible that they are isomorphic as abelian groups. Any isomorphism $A_x \cong A_{x'}$ is given by conjugation by an element of $\GL_2(\QQ)$. So if
\begin{equation*}
x = \begin{pmatrix}
1 & z \\
0 & 0
\end{pmatrix} ~\text{ and }~ x' = \begin{pmatrix}
1 & z' \\
0 & 0
\end{pmatrix}
\end{equation*}
then there is a matrix $g = \begin{pmatrix}
a & b \\
c & d
\end{pmatrix} \in \GL_2(\QQ)$
such that
\begin{equation}
\begin{pmatrix}
1 & z \\
0 & 0
\end{pmatrix}
\begin{pmatrix}
a & b \\
c & d
\end{pmatrix} = \begin{pmatrix}
a + cz & b + dz \\
0 & 0
\end{pmatrix} ~\text{ and }~ \begin{pmatrix}
1 & z' \\
0 & 0
\end{pmatrix}
\end{equation}
define the same element in $\GL_2(\ZZZ)\backslash\M_2(\AA_f)$. This is the case if and only if
\begin{equation}
a + cz \in \ZZZ^\times~\text{ and }~z' = \frac{b + dz}{a + cz}.
\end{equation}

\begin{proposition} \label{prop:ext-isomorphic}
Consider the partially defined right action of $\PGL_2(\QQ)$ on 
\begin{equation*}
\ext^1(\QQ,\ZZ) = \AA_f/\QQ
\end{equation*}
given by
\[
z \cdot \begin{pmatrix}
a & b \\
c & d
\end{pmatrix} ~=~ \frac{b + dz}{a + cz}
\]
whenever $a + cz \in \AA_f^\times$. Then two extensions $A,A' \in \ext^1(\QQ,\ZZ)$ are isomorphic as abelian groups
if and only if they are in the same $\PGL_2(\QQ)$-orbit.
\end{proposition}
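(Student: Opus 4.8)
The plan is to bootstrap from the criterion worked out in the discussion above. Using $\ext^1(\QQ,\ZZ)=\AA_f/\QQ=\ZZZ/\ZZ$, represent the two given extensions by profinite integers $z,z'\in\ZZZ$, so that the extension attached to $z$ is $A_z=\{(u,v)\in\QQ^2:u+zv\in\ZZZ\}$, and similarly for $z'$. That discussion shows that $A_z\cong A_{z'}$ as abelian groups if and only if there is a matrix $g=\begin{pmatrix}a&b\\c&d\end{pmatrix}\in\GL_2(\QQ)$ with $a+cz\in\ZZZ^\times$ and $z'=\tfrac{b+dz}{a+cz}$. Since $\ZZZ^\times\subseteq\AA_f^\times$, such a $g$ gives an element of $\PGL_2(\QQ)$ whose partial action is defined at $[z]$ and carries it to $[z']$; hence $A_z\cong A_{z'}$ forces $[z']$ into the $\PGL_2(\QQ)$-orbit of $[z]$, which is the forward half of the proposition.

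For the reverse half I would begin with a matrix $g=\begin{pmatrix}a&b\\c&d\end{pmatrix}\in\GL_2(\QQ)$ satisfying only the weaker conditions $p:=a+cz\in\AA_f^\times$ and $z'\equiv\tfrac{b+dz}{a+cz}$ modulo $\QQ$, and adjust it until it meets the criterion above; two moves suffice. First, right-multiply $g$ by $\begin{pmatrix}1&r\\0&1\end{pmatrix}$ with $r=z'-\tfrac{b+dz}{a+cz}\in\QQ$: this leaves the first column of $g$, hence $p$, unchanged while translating the output by $r$, so afterwards $z'=\tfrac{b+dz}{a+cz}$ holds on the nose. Second, use the factorization $\AA_f^\times=\QQ^\times\cdot\ZZZ^\times$ --- concretely $\lambda:=\prod_\ell \ell^{-v_\ell(p_\ell)}$ lies in $\QQ^\times$ (a finite product, as $p$ is an adelic unit) and $\lambda p\in\ZZZ^\times$ --- and replace $g$ by $\lambda g$: a rational rescaling multiplies the numerator and denominator of $\tfrac{b+dz}{a+cz}$ equally, leaving the output unchanged, while $a+cz$ becomes $\lambda p\in\ZZZ^\times$. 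Now $g$ meets the criterion, whence $A_z\cong A_{z'}$.

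Along the way I would record the identities that make the partial action sensible --- rescaling $g$ by a rational scalar and right-multiplying it by a rational unipotent act on the output as just described, and $a_{gh}+c_{gh}z=(a_g+c_gz)(a_h+c_hz')$ whenever $z'=z\cdot g$ --- from which it follows that connecting $[z]$ to $[z\cdot g]$ whenever $a+cz\in\AA_f^\times$ already defines an equivalence relation on $\AA_f/\QQ$, namely the orbit equivalence. The argument has essentially no hard step; the one ingredient with genuine content is the rescaling, and there the point is just that every adelic unit is a rational scalar times a profinite unit, so $p\in\AA_f^\times$ can always be pushed into $\ZZZ^\times$ without disturbing $\tfrac{b+dz}{a+cz}$. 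I therefore anticipate no real obstacle.
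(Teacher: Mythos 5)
Your proof is correct and follows essentially the same route as the paper: the paper's own argument is exactly to invoke the preceding criterion ($a+cz\in\ZZZ^\times$ and $z'=\tfrac{b+dz}{a+cz}$) and to observe that, since $g$ is only defined up to a rational scalar, the condition $a+cz\in\AA_f^\times$ can be upgraded to $a+cz\in\ZZZ^\times$ --- which is precisely your rescaling step via $\AA_f^\times=\QQ^\times\cdot\ZZZ^\times$. Your additional unipotent adjustment fixing the representative modulo $\QQ$ and the cocycle identity making the orbit relation well defined are details the paper leaves implicit, but they do not change the approach.
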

\begin{proof}
This follows from the above discussion. Note that for $a + cz \in \AA_f^\times$ we can assume that $a + cz \in \ZZZ^\times$, because $\begin{pmatrix}
a & b \\
c & d
\end{pmatrix}$ is only defined up to a scalar.
\end{proof}

\subsection{Computations}

We now try to determine whether some extensions in $$\ext^1(\QQ,\ZZ) = \AA_f/\QQ$$ are isomorphic as abelian groups. This will reveal some advantages and limitations of the description from Proposition \ref{prop:ext-isomorphic}.

Recall from Definition \ref{def:supernatural} the constuction of supernatural numbers as subset of $\ZZZ$, and the specific supernatural numbers $s(n)$ with $p$th component given by the largest $p$th power dividing $n$.

We first consider the set
\begin{equation}
N = \{ s(n) : n \in \NN  \} ~\subseteq~ \AA_f/\QQ  = \ext^1(\QQ,\ZZ);
\end{equation}
when do $s(n)$ and $s(m)$ define isomorphic abelian groups? In other words, when can we find some
\begin{equation}
\begin{pmatrix}
a & b \\
c & d
\end{pmatrix} \in \PGL_2(\QQ) ~\text{ such that }~ s(m) = \frac{b + d\, s(n)}{a + c\, s(n)}~?
\end{equation}
In the following, we use the notation $s(m)\sim s(n)$ when the above holds.

\begin{proposition} \ 
\begin{enumerate}
\item if $s(n) \sim s(0)$ then $n \in \{0,1\}$;
\item if $s(n) \sim s(m)$ for $n,m \notin \{0,1\}$, then $n$ and $m$ have the same prime divisors;
\item $s(p^k) \sim s(p^u)$ for all primes $p$ and integers $k,u \geq 1$;
\item $s(p^kq^r) \sim s(p^u q^v)$ for all primes $p,q$ and integers $k,r,u,v \geq 1$.
\end{enumerate}
\end{proposition}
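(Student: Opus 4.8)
The plan is to read everything off from Proposition~\ref{prop:ext-isomorphic}. For $n\ge 1$ the class $[s(n)]\in\AA_f/\QQ=\ext^1(\QQ,\ZZ)$ is also represented by $w_n:=s(n)-1$, a profinite integer whose $\ell$-component is $\ell^{v_\ell(n)}-1\in\ZZ_\ell^{\times}$ if $\ell\mid n$ and $0$ otherwise; thus $w_n$ is supported on $S_n:=\{\ell:\ell\mid n\}$ and its components there are \emph{nonzero rational integers}. Unwinding Proposition~\ref{prop:ext-isomorphic} (and its proof: rescalings and translations lie in the action), $s(n)\sim s(m)$ iff there are a matrix $g=\left(\begin{smallmatrix}1&b\\c&d\end{smallmatrix}\right)\in\GL_2(\QQ)$ and a $t\in\QQ$ with $1+cw_m\in\AA_f^{\times}$ and $\tfrac{b+dw_m}{1+cw_m}=w_n+t$ in $\AA_f$; and since $\sim$ is orbit-equivalence it is symmetric in $n,m$.

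Parts (1) and (2) then follow by comparing local components. For (1): since $s(0)\in\QQ$ we have $[s(0)]=0$, and $\tfrac{b+d\cdot 0}{1+c\cdot 0}=b\in\QQ$ shows the orbit of $0$ is $\{0\}$; hence $s(n)\sim s(0)$ iff $s(n)\in\QQ$, i.e.\ (a profinite integer lying in $\QQ$) $s(n)\in\ZZ$, and since $s(n)_\ell=\ell^{v_\ell(n)}=1$ for all $\ell\nmid n$ an integer value must be $1$, so $n\in\{0,1\}$. For (2) with $n,m\ge 2$: comparing $\ell$-components in $\tfrac{b+dw_m}{1+cw_m}=w_n+t$ for $\ell\notin S_m$ gives $b=(w_n)_\ell+t$; taking $\ell\notin S_m\cup S_n$ gives $b=t$, and then $\ell\in S_n\setminus S_m$ would give $(w_n)_\ell=0$, contradicting that $(w_n)_\ell$ is a nonzero integer. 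Hence $S_n\subseteq S_m$, and by symmetry $S_n=S_m$.

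For (3) and (4) one exhibits $g$, working with the representative $w_n$. For (3), with $n=p^k$, $m=p^u$, take $c=0$: the equation becomes $b+d\,w_{p^u}=w_{p^k}+t$, giving $b=t$ at each $\ell\ne p$ and $d(p^u-1)=p^k-1$ at $p$, so $g=\left(\begin{smallmatrix}1&b\\0&d\end{smallmatrix}\right)$ with $d=\tfrac{p^k-1}{p^u-1}\ne 0$ works. (Structurally, a $\GL_2(\QQ)$-change of basis decouples the local conditions defining the group attached to $w_{p^k}$ and identifies it with $\ZZ[\tfrac{1}{p}]\oplus\ZZ_{(p)}$, independently of $k$.) For (4), set $P'=p^k-1$, $Q'=q^r-1$, $P=p^u-1$, $Q=q^v-1$, all nonzero integers with $P'\ne Q'$ and $P\ne Q$ since $p\ne q$. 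Looking for $g=\left(\begin{smallmatrix}1&b\\c&d\end{smallmatrix}\right)$ with $\tfrac{b+dw_m}{1+cw_m}=w_n+b$ (so $t=b$), the components at $\ell\ne p,q$ are automatic and the components at $p$ and $q$ reduce to $\delta P=P'(1+cP)$ and $\delta Q=Q'(1+cQ)$, where $\delta:=d-bc=\det g$. Eliminating $\delta$ forces $c=\tfrac{PQ'-QP'}{PQ(P'-Q')}$, whence $\delta=\tfrac{P'Q'(P-Q)}{PQ(P'-Q')}\ne 0$; so, with any $b\in\QQ$ and $d=\delta+bc$, such a $g\in\GL_2(\QQ)$ exists. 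Finally $1+cw_m$ equals $1$ at every $\ell\ne p,q$ and equals the nonzero rationals $\tfrac{Q'(P-Q)}{Q(P'-Q')}$, $\tfrac{P'(P-Q)}{P(P'-Q')}$ at $p$, $q$, so it lies in $\AA_f^{\times}$; by Proposition~\ref{prop:ext-isomorphic}, $s(p^kq^r)\sim s(p^uq^v)$.

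The bookkeeping in (1)--(3) is routine; the real content is (4). What makes it go through is that the nonzero components of $w_n$ are \emph{rational}, so that the linear system over $\QQ$ coming from the $p$- and $q$-components is consistent, and that $1+cw_m$ is permitted to be a non-unit at the two bad places $p,q$ without ceasing to be a finite idele. For three or more prime divisors the analogous system in the single unknown $c$ is overdetermined, so this argument does not extend --- presumably the reason the statement is given only for one and two primes.
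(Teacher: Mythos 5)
Your proof is correct and takes essentially the same route as the paper: both reduce everything to the fractional-linear criterion of Proposition~\ref{prop:ext-isomorphic} and verify the required identities componentwise, your only (cosmetic) deviations being the shift to $w_n=s(n)-1$ and the normalization $a=1$, so that part (4) becomes a $2\times 2$ system in $(c,\det g)$ where the paper normalizes $a+c=b+d=1$ and checks a $4\times 4$ determinant. No gaps; your explicit computation of the components of $1+cw_m$ in (4) even makes the unit condition in $\AA_f^\times$ slightly more transparent than the paper's contradiction argument.
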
 \label{prop:same-prime-divisors}
\begin{proof} \ 
\begin{enumerate}
\item Note that $s(0) = 0$, so if $s(n) \sim s(0)$ then $s(n) \in \QQ$. It is clear that $n = 0,1$ are possible. Conversely, if $n \neq 0$ then the $p$th component of $s(n)$ is $1$ for almost all primes $p$. Together with $s(n) \in \QQ$ this shows $s(n) = 1$, so $n = 1$.
\item Suppose that $s(n) \sim s(m)$, more precisely
\[
s(m) = \frac{b + d\,s(n)}{a + c\,s(n)}.
\]
Then $a\,s(m) + c\,s(nm) = b + d\,s(n)$. Note that for almost all primes $p$, the $p$th components of both $s(n)$ and $s(m)$ are $1$. By looking at such a component we see that $a + c = b + d$ and by rescaling we can assume $a + c = 1 = b + d$. Now suppose that there is a prime $q$ such that the $q$th components of $s(n)$ and $s(m)$ are $1$ resp.\ $q^v$. Then $q^v = aq^v + cq^v = b + d = 1$.
\item Note that $s(p^u) = \tfrac{p^k-p^u}{p^k-1} + \tfrac{p^u-1}{p^k-1}\,s(p^k)$; this can be checked componentwise.
\item It is enough to show that there is a solution to the system of equations
\begin{equation*}
\begin{cases}
b + d = 1 \\
a + c = 1 \\
b + dp^k = ap^u + cp^{k+u} \\
b + dq^r = aq^v + cq^{r+v}
\end{cases}.
\end{equation*}
Then $s(p^uq^v) = \tfrac{b + d\,s(p^kq^r)}{a + c\,s(p^kq^r)}$ and moreover $a + c\,s(p^kq^r) \in \AA_f^\times$ (indeed, if the $p$th component of $a + c\,s(p^kq^r)$ would be zero, then $a + cp^k = 0 = b + dp^k$; together with $a + c = 1 = b+d$ we find $(a,c)=(b,d)$ but this contradicts $b + dq^r = aq^v + c q^{r+v}$).
The system of equations has a solution because
\[
\begin{vmatrix}
 1 & 1 & 0 & 0 \\
 0 & 0 & 1 & 1 \\
 1 & p^k & p^u & p^{k+u} \\
 1 & q^r & q^v & q^{r+v} \\
\end{vmatrix} = (p^k-1)(q^r-1)(p^u-q^v) \neq 0.
\]
\end{enumerate}
\end{proof}

For more general extensions, the situation becomes a lot more complicated. Recall that the Goormaghtigh conjecture\footnote{Named after the Belgian engineer/mathematician Ren\'e Goormaghtigh.} states that
the only natural number solutions to
\begin{equation}
\frac{x^n-1}{x-1} = \frac{y^m-1}{y-1}
\end{equation}
are $(x,y,n,m) = (2,5,5,3)$ and $(x,y,n,m) = (2,90,13,2)$. The conjecture is still open at the time of writing.

\begin{proposition}[Relation to Goormaghtigh conjecture]
We have
\[
s(2^4\cdot 5^2)l^\infty \sim s(2^5\cdot 5^3)l^\infty
\]
for all primes $l$. Any other solution $(p,q,l,k,r)$, $p \leq q$ of
\[
s(p^k q^r) l^\infty \sim s(p^{k+1}q^{r+1}) l^\infty
\]
gives a counterexample to Goormaghtigh conjecture.
\end{proposition}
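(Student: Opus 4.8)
The plan is to unwind the relation $\sim$ through Proposition~\ref{prop:ext-isomorphic} into a small system of equations over $\AA_f$ and to recognize that system as the Goormaghtigh equation. Write $N = s(p^kq^r)l^\infty$ and $M = s(p^{k+1}q^{r+1})l^\infty$, regarded as elements of $\ZZZ \subseteq \AA_f$, and assume throughout that $p,q,l$ are pairwise distinct primes and $k,r \geq 1$; the case $l \in \{p,q\}$ is degenerate --- one of the two equations below becomes vacuous and the relation then holds unconditionally, which in particular covers the instances $l \in \{2,5\}$ of the first displayed equivalence. By Proposition~\ref{prop:ext-isomorphic}, $N \sim M$ precisely when $N$ and $M$ lie in a common orbit of the partially defined right $\PGL_2(\QQ)$-action on $\AA_f/\QQ$. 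Absorbing the ambiguity modulo $\QQ$ into the matrix (replacing $\begin{pmatrix} a & b \\ c & d \end{pmatrix}$ by $\begin{pmatrix} 1 & \lambda \\ 0 & 1 \end{pmatrix}\begin{pmatrix} a & b \\ c & d \end{pmatrix}$, which changes neither $\det$ nor $a+cN$), this says: there is $\begin{pmatrix} a & b \\ c & d \end{pmatrix} \in \GL_2(\QQ)$ with
\[
M\,(a+cN) = b+dN \ \text{ in } \AA_f, \qquad a+cN \in \AA_f^\times .
\]

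First I would read off this identity prime by prime. At $l$ both $N$ and $M$ vanish, so the $l$-component forces $b=0$. At $p$ we have $N_p = p^k$, $M_p = p^{k+1}$, giving, after cancelling $p^k$, the equation $ap + cp^{k+1} = d$; symmetrically the $q$-component gives $aq + cq^{r+1} = d$; and at each of the infinitely many primes $t \notin \{p,q,l\}$, where all components of $N$ and $M$ equal $1$, we get $a+c = d$. Eliminating $d$ yields $a(p-1) = -c(p^{k+1}-1)$ and $a(q-1) = -c(q^{r+1}-1)$. Since $b=0$, non-vanishing of $\det = ad$ rules out $a=0$, and then the first relation rules out $c=0$; dividing, $a/c = -\tfrac{p^{k+1}-1}{p-1} = -\tfrac{q^{r+1}-1}{q-1}$. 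Hence $N \sim M$ implies
\[
\frac{p^{k+1}-1}{p-1} = \frac{q^{r+1}-1}{q-1}.
\]

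For the converse, let $R$ denote this common value, so $R = 1+p+\cdots+p^k = 1+q+\cdots+q^r > 1$, and take $(a,b,c,d) = (-R,0,1,1-R)$. A direct substitution confirms the three equations above; $\det = -R(1-R) \neq 0$ since $R>1$ (this is where $k,r\geq 1$ enters); and the components of $a+cN$ are $-\tfrac{p^k-1}{p-1}$ at $p$, $-\tfrac{q^r-1}{q-1}$ at $q$, $-R$ at $l$, and $1-R$ at every other prime, all nonzero, so $a+cN \in \AA_f^\times$. Thus $M = (b+dN)/(a+cN)$ in $\AA_f$, i.e.\ $N \sim M$. Combining the two directions, $s(p^kq^r)l^\infty \sim s(p^{k+1}q^{r+1})l^\infty$ if and only if $\tfrac{p^{k+1}-1}{p-1} = \tfrac{q^{r+1}-1}{q-1}$, which is exactly the Goormaghtigh equation $\tfrac{x^n-1}{x-1} = \tfrac{y^m-1}{y-1}$ with $(x,y,n,m) = (p,q,k+1,r+1)$. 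The known solution $(x,y,n,m) = (2,5,5,3)$ corresponds precisely to $(p,q,k,r) = (2,5,4,2)$, which establishes the first assertion (together with the degenerate remark for $l\in\{2,5\}$); the only other known solution has $y$ composite, hence yields no admissible tuple. Therefore any solution $(p,q,l,k,r)$ with $p,q$ prime and $(p,q,k,r) \neq (2,5,4,2)$ produces a solution of the Goormaghtigh equation outside the conjectured list, i.e.\ a counterexample.

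The step that needs real care is the first one: one must legitimately pass from the equality in $\AA_f/\QQ$ to an honest equality in $\AA_f$ by modifying the matrix, use the vanishing $l$-component correctly (it is what eliminates $b$, and its collapse to a trivial equation when $l \in \{p,q\}$ is why that case must be set aside), read the $p$- and $q$-components after the appropriate power cancellation, and --- in the converse direction --- check the side condition $a+cN \in \AA_f^\times$. Once the equation $\tfrac{p^{k+1}-1}{p-1} = \tfrac{q^{r+1}-1}{q-1}$ is isolated, its identification with Goormaghtigh's equation and the bookkeeping of the two known solutions are routine.
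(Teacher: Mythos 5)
Your proof is correct and takes essentially the same route as the paper's: you read off the exact $\PGL_2(\QQ)$ relation at the $l$-, $p$-, $q$- and generic components to get the linear system forcing $\tfrac{p^{k+1}-1}{p-1}=\tfrac{q^{r+1}-1}{q-1}$, with the added value that you prove the converse uniformly from $R$ (the paper only verifies the explicit identity $\tfrac{30N}{31-N}=M$ for $(2,5,4,2)$) and that you make explicit the degenerate case $l\in\{p,q\}$ (and the standing assumption that $p,q,l$ are distinct), which the paper leaves implicit. The only cosmetic slip is in your parenthetical: to turn the congruence mod $\QQ$ into an exact equality while preserving $\det$ and $a+cN$ you must right-multiply $\begin{pmatrix} a & b \\ c & d \end{pmatrix}$ by $\begin{pmatrix} 1 & \lambda \\ 0 & 1 \end{pmatrix}$ (left multiplication changes $a$, hence $a+cN$), which is clearly what you intended.
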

\begin{proof}
We can check componentwise that
\begin{equation}
\frac{30\,s(2^4\cdot 5^2) l^\infty}{31-s(2^4\cdot 5^2)l^\infty} = s(2^5\cdot 5^3).
\end{equation}
Further, if 
\begin{equation}
\frac{b + d \, s(p^k q^r)l^\infty }{a + c \, s(p^k q^r)l^\infty} = s(p^{k+1}q^{r+1})l^\infty,
\end{equation}
then we can assume $b + d = 1 = a+c$ like in the proof of (2), and by looking at the components we get
\begin{equation}
\begin{cases}
b + d = 1 \\
a + c = 1 \\
b + dp^k = ap^{k+1} + cp^{2k+1} \\
b + dq^r = aq^{r+1} + cq^{2r+1} \\
b = 0
\end{cases}.
\end{equation}
From this we find
\begin{equation}
\begin{cases}
1 = a + c \\
1 = ap + cp^{k+1} \\
1 = aq + cq^{k+1}
\end{cases}
\end{equation}
so 
\begin{equation}
a = -\tfrac{p^{k+1}-1}{p-1}\, c = - \tfrac{q^{r+1}-1}{q-1}\, c
\end{equation}
but this means that $(p,q,k+1,r+1)$ is a counterexample to Goormaghtigh conjecture, except when $p^{k+1} = 2^5$ and $q^{r+1} = 5^3$.
\end{proof}

\section*{Acknowledgements}

I would like to thank Lieven Le Bruyn for coming up with the problem of describing toposes associated to $2\times2$ integer matrices, and for helping me understand Conway's big picture and the work of Connes and Consani.

Also thanks to Yinhuo Zhang for leading me to the relation with Goormaghtigh conjecture (Section \ref{sec:applications}), to Julia Ramos Gonz\'alez for discussions about categories of $R$-sets versus categories of $R$-modules and to Pieter Belmans for helping me out regarding Hasse-Weil zeta functions.

\end{document}